\DeclareMathOperator{\supp}{supp}
\DeclareMathOperator{\Real}{Re}
\DeclareMathOperator{\Ind}{Ind}
\DeclareMathOperator{\alg}{alg}
\DeclareMathOperator{\red}{red}
\DeclareMathOperator{\Av}{Av}
\DeclareMathOperator{\U}{U}
\DeclareMathOperator{\Spin}{Spin}
\DeclareMathOperator{\restr}{restr}
\newcommand{\beq}[1]{\begin{equation} \label{#1}}
\newcommand{\eeq}{\end{equation}}
\newcommand{\bea}{\begin{eqnarray}}
\newcommand{\eea}{\end{eqnarray}}
\begin{document}

\theoremstyle{plain}
\newtheorem{theorem}{Theorem}[section]
\newtheorem{thm}{Theorem}[section]
\newtheorem{lemma}[theorem]{Lemma}
\newtheorem{proposition}[theorem]{Proposition}
\newtheorem{prop}[theorem]{Proposition}
\newtheorem{corollary}[theorem]{Corollary}
\newtheorem{conjecture}[theorem]{Conjecture}

\theoremstyle{definition}
\newtheorem{definition}[theorem]{Definition}
\newtheorem{defn}[theorem]{Definition}
\newtheorem{example}[theorem]{Example}
\newtheorem{remark}[theorem]{Remark}
\newtheorem{rem}[theorem]{Remark}

\newcommand{\C}{\mathbb{C}}
\newcommand{\R}{\mathbb{R}}
\newcommand{\Z}{\mathbb{Z}}
\newcommand{\N}{\mathbb{N}}
\newcommand{\Q}{\mathbb{Q}}
\newcommand{\ch}{\textnormal{ch}}
\newcommand{\cU}{\mathcal{U}}

\newcommand{\Supp}{{\rm Supp}}

\newcommand{\field}[1]{\mathbb{#1}}
\newcommand{\bZ}{\field{Z}}
\newcommand{\bR}{\field{R}}
\newcommand{\bC}{\field{C}}
\newcommand{\bN}{\field{N}}
\newcommand{\bT}{\field{T}}
\newcommand{\rede}[1]{\textcolor{red}{#1}}
\newcommand{\bk}{\mathbf{k}}
\newcommand{\cB}{{\mathcal{B} }}
\newcommand{\cK}{{\mathcal{K} }}
\newcommand{\cF}{{\mathcal{F} }}
\newcommand{\cO}{{\mathcal{O} }}
\newcommand{\cE}{\mathcal{E}}
\newcommand{\cS}{\mathcal{S}}
\newcommand{\calL}{\mathcal{L}}

\newcommand{\HH}{{\mathcal{H} }}
\newcommand{\tilH}{\widetilde{\HH}}
\newcommand{\HX}{\HH_X}
\newcommand{\Hpi}{\HH_{\pi}}
\newcommand{\HHpi}{\HH \otimes \HH_{\pi}}
\newcommand{\Ltwopi}{L^2_{\pi}(X, \HHpi)}
\newcommand{\norm}[1]{\| #1\|}
\newcommand{\KK}{K\!K}

\newcommand{\D}{D \hspace{-0.27cm }\slash}
\newcommand{\Dsmall}{D \hspace{-0.19cm }\slash}

\newcommand{\mybigwedge}{\textstyle{\bigwedge}}

\newcommand{\CGmax}{C^*_{G, \max}}
\newcommand{\DGmax}{D^*_{G, \max}}
\newcommand{\CGred}{C^*_{G, \red}}
\newcommand{\CGalg}{C^*_{G, \alg}}
\newcommand{\DGalg}{D^*_{G, \alg}}
\newcommand{\CGker}{C^*_{G, \ker}}
\newcommand{\Cmax}{C^*_{\max}}
\newcommand{\Dmax}{D^*_{\max}}
\newcommand{\Cred}{C^*_{\red}}
\newcommand{\Calg}{C^*_{\alg}}
\newcommand{\Dalg}{D^*_{\alg}}
\newcommand{\Cker}{C^*_{\ker}}
\newcommand{\tilCalg}{\widetilde{C}^*_{\alg}}
\newcommand{\Cpiker}{C^*_{\pi, \ker}}
\newcommand{\Cpialg}{C^*_{\pi, \alg}}
\newcommand{\Cpimax}{C^*_{\pi, \max}}

\newcommand{\Avpi}{\Av^{\pi}}

\newcommand{\Gtc}{\Gamma^{\infty}_{tc}}
\newcommand{\tilD}{\widetilde{D}}
\newcommand{\XoneH}{X^{\HH}_1}

\newcommand{\XX}{\mathfrak{X}}

\def\kt{\mathfrak{t}}
\def\kk{\mathfrak{k}}
\def\kp{\mathfrak{p}}
\def\kg{\mathfrak{g}}
\def\kh{\mathfrak{h}}

\newcommand{\pilamrho}{[\pi_{\lambda+\rho}]}

\newcommand{\Trestr}{\mathcal{T}_{\restr}}
\newcommand{\TdN}{\mathcal{T}_{d_N}}

\newcommand{\omG}{\om/\hspace{-1mm}/G}
\newcommand{\om}{\omega} \newcommand{\Om}{\Omega}
\newcommand{\Spinc}{\Spin^c}

\def\kt{\mathfrak{t}}
\def\kk{\mathfrak{k}}
\def\kp{\mathfrak{p}}
\def\kg{\mathfrak{g}}
\def\kh{\mathfrak{h}}

\newcommand{\ddt}{\left. \frac{d}{dt}\right|_{t=0}}

\newenvironment{proofof}[1]
{\noindent \emph{Proof of #1.}}{\hfill $\square$}

   \newenvironment{dedication}
        {\begin{quotation}\begin{center}\begin{em}}
        {\par\end{em}\end{center}\end{quotation}}
        
\newcommand{\Todo}{\textbf{To do}}

%\title{Equivariant Callias-type operators and the integration trace}
\title{Covering complexity, scalar curvature, and quantitative $K$-theory}
\date{}
\author{Hao Guo$^1$ and Guoliang Yu$^2$\footnote{The second author is partially supported by NSF 1700021, NSF 2000082, and the Simons Fellows Program.}}
\date{%
    $^1$Tsinghua University\\%
    $^2$Texas A\&M University\\[2ex]%
%    \today
}
%\address[Guoliang Yu]{ Department of
%	Mathematics, Texas A\&M University}
%\email{guoliangyu@math.tamu.edu}
%\thanks{The second author is partially supported by NSF 1700021, NSF 2000082, and the Simons Fellows Program.}
%\thanks{On behalf of all authors, the corresponding author states that there is no conflict of interest}
%\thanks{Data sharing not applicable to this article as no datasets were generated or analysed during the current study.}

%\subjclass[2010]{46L80, 58B34, 53C20}

\maketitle
\begin{dedication}
{Dedicated to Professor Blaine Lawson on the occasion of his 80th birthday.}
\end{dedication}
\vspace{0.3cm}

\begin{abstract}
We establish a relationship between a certain notion of covering complexity of a Riemannian spin manifold and positive lower bounds on its scalar curvature. This makes use of a pairing between quantitative operator $K$-theory and Lipschitz topological $K$-theory, combined with an earlier vanishing theorem for the quantitative higher index.
\end{abstract}

\tableofcontents
\section{Introduction}
In \cite{GXY3}, a notion of quantitative higher index was introduced for elliptic differential operators on Riemannian manifolds. It is a natural analogue of the higher index and takes values in quantitative operator $K$-theory, a framework that refines operator $K$-theory by taking into account geometric structures of $C^*$-algebras. We refer to \cite{Guentner,Oyono2,Yu1} for more detailed treatments of quantitative operator $K$-theory. 

One of the results proved in \cite{GXY3} was that, in the spin setting, non-vanishing of the quantitative higher index is an obstruction to scalar curvature being bounded below by a positive constant that depends (inversely) on the propagation parameter of the index. The goal of this paper is two-fold. First, we give an explicit pairing between quantitative operator $K$-theory and Lipschitz topological $K$-theory (Theorem \ref{thm pairing defined} and Corollary \ref{cor useful pairing}). This provides one possible avenue for detecting non-vanishing of the quantitative higher index. Second, we apply this pairing to establish a relationship between a certain type of covering complexity of Riemannian spin manifolds and positive lower bounds on scalar curvature. 

Recall that an open cover $\mathcal{U}=\{U_i\}_{i\in I}$ is called good if any finite intersection of its members is contractible. For any $h>0$, the $h$-multiplicity of $\cU$ is the largest $m$ for which there exists an $x\in M$ such that the open $h$-ball around $x$ meets $m$ members of $\cU$. The Lebesgue number of $\cU$ is the largest $\lambda$ such that for each $x\in M$, the open $\lambda$-ball around $x$ is contained within a member of $\cU$.

We prove:

\begin{theorem}
\label{thm main}
For any $R>0$ and positive integer $m$, there exists a positive constant $k(R,m)$ such that the following holds. If $(M,g)$ is a complete Riemannian spin manifold that admits a uniformly bounded good open cover with Lebesgue number $R$ and $h$-multiplicity $m$ for some $h>0$, then 
$$\inf_{x\in M}\kappa_g(x)\leq k(R,m),$$
where $\kappa_g$ is the scalar curvature of $g$.
\end{theorem}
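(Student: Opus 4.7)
The plan is to combine three ingredients: a controlled Lipschitz ``classifying map'' from $M$ to the nerve of the cover, a suitable Lipschitz topological $K$-theory class on the nerve pulled back to $M$, and the pairing of Theorem \ref{thm pairing defined}--Corollary \ref{cor useful pairing} together with the vanishing result for the quantitative higher index from \cite{GXY3}. The logic is contrapositive: if the scalar curvature is uniformly too positive, the quantitative higher index of $\D_M$ must vanish, yet the pairing with the pulled-back class will be nonzero, a contradiction.

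First, from the good cover $\mathcal{U} = \{U_i\}_{i\in I}$ with Lebesgue number $R$ and $h$-multiplicity $m$, I would build a Lipschitz map $\phi : M \to |N(\mathcal{U})|$ into the geometric realization of the nerve by the standard recipe
\[
\rho_i(x) := \frac{d(x, M \setminus U_i)}{\sum_j d(x, M \setminus U_j)}, \qquad \phi(x) := \sum_i \rho_i(x)\, v_i,
\]
where $v_i$ is the vertex of $N(\mathcal{U})$ labelled by $U_i$. The Lebesgue number $R$ guarantees the denominator is bounded below by $R$, and the $h$-multiplicity bounds the number of nonzero terms in any ball of radius $h$; taken together these yield a Lipschitz constant, a local propagation bound, and a dimension bound (at most $m-1$) on $\phi$ depending only on $R$ and $m$. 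Because $\mathcal{U}$ is good, the nerve theorem makes $\phi$ a homotopy equivalence, and uniform boundedness of $\mathcal{U}$ plus the multiplicity bound give $|N(\mathcal{U})|$ the structure of a bounded-geometry simplicial complex.

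Second, I would pull back a Lipschitz topological $K$-theory class $\beta$ on $|N(\mathcal{U})|$ (for instance, the class associated with the identity or a Bott-type class tailored to the simplicial structure) along $\phi$, and pair $\phi^*\beta$ with the quantitative higher index $\ind_\varepsilon(\D_M)$ using Corollary \ref{cor useful pairing}. Functoriality of the pairing together with the homotopy equivalence $\phi$ identifies the result with the evaluation of $\beta$ against the pushed-forward index on the nerve, which is a nonzero topological invariant. Finally, invoking the vanishing theorem of \cite{GXY3}: if $\inf_M \kappa_g$ exceeds some explicit function of the propagation parameter $\varepsilon$ supplied by $\phi$ and the pairing, then $\ind_\varepsilon(\D_M) = 0$. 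Since the propagation tolerance and norm bounds used in the pairing depend only on $R$ and $m$, this yields the desired constant $k(R,m)$.

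The main obstacle I foresee is arranging the pairing to be nonzero in a quantitatively controlled way: the pulled-back class $\phi^*\beta$ and the chosen $\beta$ must be compatible with the filtration by propagation in the quantitative $K$-theory framework, and the pairing must survive all estimates with propagation depending only on $R, m$. This likely requires a careful ``localized'' model of $\beta$ on $|N(\mathcal{U})|$ (exploiting that simplices have diameter controlled by $R$) and a Chern-character or local-index computation to detect nontriviality. A secondary technical point is tracking constants uniformly along the chain of Lipschitz estimates to extract an explicit $k(R,m)$ from the vanishing theorem.
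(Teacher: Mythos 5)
Your overall skeleton matches the paper's proof: build the Lipschitz nerve map $\phi$ from the partition of unity $\rho_i(x)=d(x,M\setminus U_i)/\sum_j d(x,M\setminus U_j)$, use Theorem \ref{thm Lipschitz} to get $L_{m-1}$-Lipschitz projections on $N(\mathcal{U})$ which pull back to $mR^{-1}L_{m-1}$-Lipschitz projections on $M$, pair against $\Ind^{\varepsilon,r,N}(D)$ via Corollary \ref{cor useful pairing}, and reach a contradiction with the vanishing theorem. Up to this point you have reproduced the paper's argument.

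However, you flag as ``the main obstacle'' exactly the point you have not actually resolved: which Lipschitz $K$-theory class to pair against, and why the resulting number is nonzero. Your two suggestions are not adequate as stated. ``The class associated with the identity'' does not live in $K_0(C_0(M))$ (that group is the kernel of $\pi_*\colon K_0(C_0(M)^+)\to\Z$, and the unit maps to $1$), and a generic ``Bott-type class tailored to the simplicial structure'' on a possibly non-compact nerve gives no a priori control over the pairing. In the closed case (Theorem \ref{thm weaker}) the paper can invoke the fundamental class and Atiyah--Singer, but for a general complete manifold there is no fundamental class to pair against, so a genuinely new idea is needed. The paper's key observation is to localize at a single point: choose $x_0\in M$, a small ball $B_\delta(x_0)$, let $\beta_\delta\in K_0(C_0(B_\delta(x_0)))$ be the Bott element, and set $\beta_M=i_{!*}\beta_\delta\in K_0(C_0(M))$ via extension by zero. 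One then uses $(\phi^*)^{-1}\beta_M$ on the nerve as the class to represent by $L_{m-1}$-Lipschitz projections. Nonvanishing then follows from the computation
$\langle[D],\beta_M\rangle_0=\langle i_!^*[D],\beta_\delta\rangle_0=\langle[D_{B_\delta(x_0)}],\beta_\delta\rangle_0\neq 0$,
using $i_!^*[D]=[D_{B_\delta(x_0)}]$ and the fact that the local index of the Dirac operator against the Bott class on a small ball is always nonzero, independent of the global geometry. Without this localization step your argument does not close: functoriality and a homotopy equivalence alone do not produce a nonzero number, and a Chern-character argument would again require compactness. So there is a genuine gap, precisely at the point you yourself identified as the main obstacle.
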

\begin{remark}
We will give a more precise expression for the constant $k(R,m)$ in the proof of Theorem \ref{thm main}: it is an increasing function of $m$ and is proportional to $R^{-2}$.
\end{remark}
The contents of the paper are as follows.
%around the proof of Theorem \ref{thm main}.
We begin in section \ref{sec Lipschitz} by recollecting some facts about Lipschitz $K$-theory and a Lipschitz homotopy equivalence, to be used in the remainder of the paper. In section \ref{sec closed}, we illustrate the geometric ideas involved in the proof of Theorem \ref{thm main} by first establishing a weaker result, Theorem \ref{thm weaker}, using classical tools. The quantitative refinement of this proof is then the subject of rest of the paper: section \ref{sec quantitative} concerns the explicit pairing between quantitative $K$-theory and Lipschitz $K$-theory, while in section \ref{sec proof general} we apply this pairing to prove Theorem \ref{thm main}.

\section{Lipschitz representatives of $K$-theory}
\label{sec Lipschitz}
%There are several index-theoretic ways in which one can approach this theorem, for example via the relative index theorem of Gromov-Lawson \cite{Gromov-Lawson2} or an explicit pairing between the $K$-homology and $K$-theory of $M$.

In this section we collect together some useful facts about Lipschitz $K$-theory and a Lipschitz homotopy equivalence that will be used in the rest of the paper.

Let $C_0(X)$ be the $C^*$-algebra of $\C$-valued functions vanishing at infinity on a metric space $X$ and $B_r(S)$ be the open $r$-ball around a subset $S\subseteq X$.

For the proof of the main theorem, we will need the existence of certain Lipschitz representatives of $K$-theory elements for finite-dimensional simplicial complexes, established in \cite{WXYdecay}. 
\begin{definition}
\label{def Lipschitz projection}
An element $f\in M_n(C_0(M)^+)$ is said to be \emph{$L$-Lipschitz} if
\begin{equation}
\label{eq Lipschitz}
\norm{f(x)-f(y)}_{M_n(\C)}\leq L\cdot d_M(x,y)
\end{equation}
for all $x,y\in M$, where on the left-hand side we have the operator norm on $M_n(\C)$.	
\end{definition}

\begin{theorem}[{\cite[Theorem 1.5]{WXYdecay}}]
\label{thm Lipschitz}
For each natural number $k$, there exists a constant $L_k$ such that the following holds. If $X$ is a locally compact $k$-dimensional simplicial complex equipped with the simplicial metric, then every class in $K_0(C_0(X))$ can be represented by a formal difference of $L_k$-Lipschitz projections. Further, any class represented by a projection that is constant outside a compact subset $K$ can represented by one that is $L_k$-Lipschitz and constant outside $B_1(K)$.
\end{theorem}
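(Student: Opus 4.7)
My plan is to combine a classifying-space reduction with a barycentric smoothing, then retract to genuine projections via the continuous functional calculus, arranging the estimates so the Lipschitz constant depends only on $k$.

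First I would reduce to finding a Lipschitz representative for a single projection. Any class in $K_0(C_0(X))$ may be written as $[p] - [q]$ with $p, q \in M_n(C_0(X)^+)$, and the identity $[p] - [q] = [p \oplus (1_n - q)] - [1_n]$ lets me focus on one projection, since the subtracted $[1_n]$ is already constant. Viewing this projection as a continuous map $p : X^+ \to \mathrm{Gr}_r(\C^N) \subset M_N(\C)$ and using that $\dim X = k$ together with stability of complex vector bundles, I can bound $r$ and $N$ purely in terms of $k$. At the cost of replacing $p$ by a homotopic projection within its class, I would also arrange that $\|p(v) - p(w)\| < \varepsilon$ whenever $v, w$ are adjacent vertices, where $\varepsilon$ is a universal constant to be chosen. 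This step is the main obstacle and is where $\dim X = k$ is really used; I expect the cleanest implementation is to fix once and for all a finite simplicial model $Y_k$ for the $k$-skeleton of a classifying space together with a Lipschitz universal projection on it, and pull back along a simplicial map $X \to Y_k$ whose Lipschitz constant is controlled purely by $k$.

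Next I would build a Lipschitz near-projection via the barycentric partition of unity $\{\phi_v\}_v$ on $X$. In the simplicial metric, each $\phi_v$ is Lipschitz with a constant $C_k$ depending only on $k$. Setting
$$\tilde p(x) := \sum_v \phi_v(x)\, p(v),$$
produces a self-adjoint matrix-valued function on $X$ that is $L'_k$-Lipschitz with $L'_k$ depending only on $k$ and on the universal rank bound from Step 1. By the pre-homotopy, $\tilde p(x)$ stays within $O(\varepsilon)$ of the projection $p(v)$ for any vertex $v$ of the carrying simplex, so for $\varepsilon$ small enough the spectrum of $\tilde p$ is uniformly separated from $1/2$.

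Finally I would retract $\tilde p$ to a projection by continuous functional calculus. Fix once and for all a smooth $\chi : \R \to \R$ equal to $0$ on a neighborhood of $0$ and to $1$ on a neighborhood of $1$, with bounded Lipschitz constant on the spectral range of $\tilde p$; then $p' := \chi(\tilde p)$ is a genuine projection, homotopic to $p$ through projections, and $L_k$-Lipschitz with $L_k$ depending only on $k$. For the refinement in the second sentence of the theorem, the construction is local in the combinatorial sense: $\tilde p(x)$ depends only on values of $p$ at vertices of the simplex carrying $x$, so if $p$ is constant equal to $p_\infty$ outside a compact set $K$, then $p'$ equals $p_\infty$ outside the $1$-neighborhood $B_1(K)$, as required.
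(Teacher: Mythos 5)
This statement is cited by the paper as Theorem~1.5 of \cite{WXYdecay}; the paper itself contains no proof, so I can only assess your sketch on its own terms.

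Your outline — reduce to a single projection, pre-homotope so that values at adjacent vertices are uniformly close, barycentrically interpolate, then retract to a genuine projection by functional calculus — is a reasonable overall strategy, and the last two steps (interpolation via the barycentric partition of unity, then a functional-calculus retraction with spectral gap around $1/2$) are technically sound once the pre-homotopy is in hand. However, the pre-homotopy in your first step is precisely the content of the theorem, and the mechanism you propose for it does not work as stated. You suggest fixing a finite simplicial model $Y_k$ for a skeleton of the classifying space and pulling back a universal Lipschitz projection along a \emph{simplicial} map $X\to Y_k$. But the simplicial approximation theorem only produces a simplicial map after passing to some barycentric subdivision of $X$; the amount of subdivision needed depends on the particular continuous classifying map, not just on $k$, and subdividing changes the simplicial metric on $X$ (so a simplicial map out of the subdivision is no longer Lipschitz for the original metric with a $k$-dependent constant). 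Simplicial maps into a \emph{fixed} target are far too rigid to realize every homotopy class: for instance there is no degree-two simplicial map from $\partial\Delta^2$ (three vertices) to itself, even though such a continuous map certainly exists. So the claim that one can always find a simplicial (or $C(k)$-Lipschitz) classifying map $X\to Y_k$ representing the given $K$-theory class is exactly what needs to be proven, and your proposal does not supply an argument. The genuine proof has to build the Lipschitz representative by some inductive construction over the skeleta of $X$ itself — for example, choosing values at vertices, extending over edges by controlled paths in the Grassmannian, and continuing over higher skeleta using the high connectivity of $\mathrm{Gr}_r(\C^N)$ together with a quantitative Lipschitz extension lemma for simplices — rather than by pulling back from a fixed model.

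A secondary, smaller gap: for the last sentence of the theorem you need the pre-homotopy in your first step to preserve the property of being constant outside a fixed compact set. As written the pre-homotopy is global and this is not addressed; it should be, since the second assertion requires the output to be constant outside $B_1(K)$.
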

\begin{remark}
\label{rem Lk}
In fact, \cite[Theorem 1.5]{WXYdecay} also provides the analogous result for $K_1(C_0(X))$, but we will not need it in this paper.

The constant $L_k$ can be taken to be of the form $Ck^{C'k}$ for some constants $C$ and $C'$ \cite{WXYdecay}.
\end{remark}

%The following result guarantees the existence of certain Lipschitz representatives (in the sense of \ref{def Lipschitz K-theory}) of $K$-theory elements of $N(\cU)$.

Let $M$ be a Riemannian manifold with a good cover $\cU$. There is a standard way to construct a homotopy equivalence between $M$ and the nerve complex of $\cU$, denoted by $N(\cU)$. Recall that $N(\cU)$ is the simplicial complex whose vertices are the elements of $\mathcal{U}$, and a simplex is filled whenever its vertices have non-empty intersection in $M$. For any partition of unity $\{\varphi_i\}_{i\in I}$ subordinate to $\mathcal{U}$, the map
\begin{align}
\label{eq phi}
\phi\colon M&\to N(\cU),\nonumber\\
x&\mapsto\sum_{i\in I}\varphi_i(x)U_i
\end{align}
is a homotopy equivalence \cite[section 4.G]{Hatcher}.

Now suppose in addition that $\mathcal{U}$ is uniformly bounded with Lebesgue number $R>0$ and $r$-multiplicity $m$ for some $r>0$, as in the hypothesis of Theorem \ref{thm main}. Then $N(\mathcal{U})$ is a locally compact $(m-1)$-dimensional simplicial complex, and $\phi$ is a proper map. We may refine our choice of $\{\varphi_i\}_{i\in I}$ as follows to gain Lipschitz control over $\phi$, where distance on $N(\cU)$ is defined using the simplicial metric $d_{N(\cU)}$. (Recall that for a given simplex $[U_{i_0},\ldots,U_{i_k}]$, $k\leq m-1$, and points $x=\sum_{j=0}^k s_j U_{i_j}$ and $y=\sum_{j=0}^k t_j U_{i_j}$ written as convex combinations of the vertices, 
$$d_{N(\cU)}(x,y)=\sum_{j=1}^k|s_j-t_j|,$$
while in general $d_{N(\cU)}(x,y)$ is defined to be the distance of the shortest path connecting $x$ to $y$, and infinity if no such path exists.) For each $i\in I$, define $\psi_i\colon U_i\to\R$ by $\psi_i(x)=d_M(x,M\backslash U_i)$, where $d_M$ is the Riemannian distance on $M$. Define $\varphi_i$ by
\begin{align}
\label{eq varphi}
\varphi_i(x)&=\frac{\psi_i(x)}{\sum_{j\in I}\psi_j(x)}.
\end{align}
The following lemma is adapted from \cite[Proposition 5.3]{WXYdecay}.
\begin{lemma}
\label{lem Lipschitz phi}
	The map $\phi\colon M\to N(\cU)$ defined using the partition of unity \eqref{eq varphi} is $mR^{-1}$-Lipschitz.
\end{lemma}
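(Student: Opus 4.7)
The plan is to estimate the differential of $\phi$ pointwise (almost everywhere) and integrate along minimizing geodesics in $M$.

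First I would collect three elementary estimates about the partition data. Each $\psi_i$, extended by zero outside $U_i$, is $1$-Lipschitz as a distance function. The sum $S(z) := \sum_{j \in I} \psi_j(z)$ satisfies $S(z) \geq R$ for every $z \in M$, because the Lebesgue number hypothesis gives $B_R(z) \subseteq U_{i_0}$ for some $i_0$, and hence $\psi_{i_0}(z) = d_M(z, M\setminus U_{i_0}) \geq R$. Moreover, at each $z \in M$ at most $m$ of the $\psi_j$ are nonzero, since any $U_j$ containing $z$ meets $B_h(z)$, and the $h$-multiplicity of $\cU$ is $m$.

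Next I would compute $d\phi$. The map $\phi$ is Lipschitz as a composition of Lipschitz operations, hence differentiable almost everywhere by Rademacher's theorem. At a point of differentiability $z$ and a unit tangent $v \in T_zM$, the quotient rule for $\varphi_i = \psi_i/S$ gives
$$d\varphi_i(v) \;=\; \frac{d\psi_i(v)}{S(z)} \;-\; \frac{\varphi_i(z)\,dS(v)}{S(z)}.$$
Summing absolute values over $i$, using $\sum_i \varphi_i(z) = 1$, $|d\psi_i(v)| \leq 1$ for the at most $m$ nonzero terms, and $|dS(v)| \leq \sum_i|d\psi_i(v)| \leq m$, yields the total-variation bound
$$\sum_i |d\varphi_i(v)| \;\leq\; \frac{1}{S(z)}\Bigl(\sum_i |d\psi_i(v)| + |dS(v)|\Bigr) \;\leq\; \frac{2m}{R}.$$

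Finally I would convert this pointwise bound into the simplicial-metric bound via a path-length argument. For a minimizing geodesic $\gamma\colon[0,d_M(x,y)]\to M$ from $x$ to $y$, the image $\phi\circ\gamma$ is a continuous path in $N(\cU)$ traversing finitely many simplices. The affine constraint $\sum_i d\varphi_i(v) = 0$ (coming from $\sum_i \varphi_i \equiv 1$) implies that the positive and negative parts of the infinitesimal coordinate variation have equal magnitude, and the simplicial-metric formula $\sum_{j\geq 1}|s_j-t_j|$---which omits one coordinate per simplex---effectively captures only one of these two parts. This yields the factor $\tfrac{1}{2}$ needed to obtain
$$d_{N(\cU)}(\phi(x),\phi(y)) \;\leq\; \tfrac{1}{2}\int_0^{d_M(x,y)} \sum_i |d\varphi_i(\dot\gamma(t))|\,dt \;\leq\; \frac{m}{R}\,d_M(x,y).$$

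The main bookkeeping challenge is the last conversion: a naive use of the full $\ell^1$ variation gives only $2m/R$, and the sharper constant $m/R$ requires using the cancellation $\sum_i d\varphi_i(v)=0$ in tandem with the particular convention of the simplicial metric, especially across transitions between adjacent simplices.
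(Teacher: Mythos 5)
Your pointwise-differential approach is genuinely different from the paper's. The paper argues coordinate-by-coordinate: it claims that each $\varphi_{i_j}$ is individually $\tfrac{1}{R}$-Lipschitz (from the numerator being $1$-Lipschitz and the denominator being bounded below by the Lebesgue number $R$), and then sums over the $k+1\leq m$ coordinates. You instead estimate the $\ell^1$ norm of $d\phi$ pointwise and integrate along geodesics. Your intermediate estimate $\sum_i|d\varphi_i(v)|\leq 2m/R$ is correct and is essentially the sharpest thing your decomposition yields.

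However, the halving step at the end has a genuine gap. The constraint $\sum_{j=0}^k d\varphi_{i_j}(v)=0$ does \emph{not} imply that the truncated sum $\sum_{j=1}^k|d\varphi_{i_j}(v)|$ is at most half the full sum $\sum_{j=0}^k|d\varphi_{i_j}(v)|$. In fact the inequality runs the other way: the constraint gives $|d\varphi_{i_0}(v)|\leq\sum_{j=1}^k|d\varphi_{i_j}(v)|$, hence $\sum_{j=0}^k|d\varphi_{i_j}(v)|\leq 2\sum_{j=1}^k|d\varphi_{i_j}(v)|$, i.e.\ dropping the $j=0$ term costs you at most a factor of $2$, not gains you one. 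A concrete counterexample to the claimed halving: take $d\varphi_{i_0}(v)=0$, $d\varphi_{i_1}(v)=t$, $d\varphi_{i_2}(v)=-t$. The constraint is satisfied, the positive and negative parts indeed have equal magnitude, yet the truncated sum equals the full sum; the dropped coordinate captures none of either part. The issue compounds across transitions between adjacent simplices, where the role of the omitted vertex changes. As it stands, your argument proves that $\phi$ is $\tfrac{2m}{R}$-Lipschitz, not $\tfrac{m}{R}$-Lipschitz. For the downstream application in Theorem \ref{thm main} this only worsens the final constant $k(R,m)$ by a fixed factor, but it does not establish the lemma as stated. To recover the constant $\tfrac{m}{R}$ you would need either the paper's coordinate-wise route or a sharper algebraic identity in place of the naive triangle-inequality split.
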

\begin{proof}
	Unwrapping the definition of the simplicial metric, it suffices to prove that for any $x,y\in M$ and any simplex $[U_{i_0},\ldots, U_{i_k}]$ containing $\phi(x)$ and $\phi(y)$, where $k\leq m-1$, we have
	$$d_{N(\cU)}(\phi(x),\phi(y))\leq\frac{m}{R}d_M(x,y).$$
	For this it suffices to show that each of the $k+1\leq m$ coordinate functions
	$$x\mapsto\frac{d_M(x,M\backslash U_{i_j})}{\sum_{j=0}^k d_M(x,M\backslash U_{i_j})}$$
	is $\frac{1}{R}$-Lipschitz. By definition of $\phi$, the set $\{U_{i_0},\ldots,U_{i_k}\}$ contains all of the $U_i$ that contain $x$, hence the denominator is bounded below by the Lebesgue number $R$. We conclude by observing that the numerator is $1$-Lipschitz.
\end{proof}

%The contents of the paper are as follows.
%%around the proof of Theorem \ref{thm main}. 
%In section \ref{sec closed}, we illustrate the geometric ideas involved in the proof of Theorem \ref{thm main} by first establishing a weaker result, Theorem \ref{thm weaker}, using comparatively classical tools. The quantitative refinement of this proof is then the subject of rest of the paper: section \ref{sec quantitative} concerns the explicit pairing between quantitative $K$-theory and Lipschitz $K$-theory, while in section \ref{sec proof general} we apply this pairing to prove Theorem \ref{thm main}.

%\section*{Acknowledgements}
%The second author is partially supported by NSF 1700021, NSF 2000082, and the Simons Fellows Program.

\section{A weaker result for closed manifolds}
\label{sec closed}
We now wish to illustrate the geometric ideas that underlie the proof of Theorem \ref{thm main} by proving a weaker version of that theorem. As this section serves only an expository role in relation to the rest of the paper, it may be skipped without loss of continuity.

We consider the case where $M$ is closed and the constant $k(m,R)$ is allowed to depend on the dimension of $M$. The argument uses relatively classical computations involving Dirac operators, and an excellent reference is \cite[section II.8]{Lawson-Michelsohn}. We prove:
\begin{theorem}
\label{thm weaker}
For any $R>0$ and positive integers $m$ and $l$, there exists a constant $k(R,m,l)$ such that the following holds. If $(M,g)$ is a closed $l$-dimensional Riemannian spin manifold that admits a uniformly bounded good open cover with Lebesgue number $R$ and $R$-multiplicity $m$, then 
$$\inf_{x\in M}\kappa_g(x)\leq k(R,m,l),$$
where $\kappa_g$ is the scalar curvature of $g$.
\end{theorem}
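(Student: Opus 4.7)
The strategy is to twist the Dirac operator on $M$ by a projection pulled back via the Lipschitz homotopy equivalence $\phi: M \to N(\cU)$ of Lemma \ref{lem Lipschitz phi} (with Lipschitz constant $m R^{-1}$), and to convert the non-vanishing of the resulting twisted index into an upper bound on scalar curvature through the Lichnerowicz formula. Since $\cU$ has $R$-multiplicity $m$, the nerve $N(\cU)$ has dimension at most $m-1$, so Theorem \ref{thm Lipschitz} produces representatives of classes in $K^*(N(\cU))$ by $L_{m-1}$-Lipschitz projections; pulling back under $\phi$ yields projections on $M$ that are $L_{m-1} m R^{-1}$-Lipschitz, and this constant will ultimately control the twisting curvature.

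To produce a non-vanishing twisted index, first fix a class $\alpha \in K^*(M)$ that pairs non-trivially with the $K$-homology class $[\D] \in K_*(C(M))$ of the spin Dirac operator. Such an $\alpha$ exists for any closed spin $l$-manifold by non-degeneracy of the index pairing: in even dimensions one can, using Atiyah--Singer and the rational isomorphism $\ch \colon K^0(M) \otimes \R \xrightarrow{\sim} H^{\mathrm{even}}(M;\R)$, take $\alpha \in K^0(M)$ whose top-degree Chern character represents a nonzero multiple of the volume form, so that $\langle \alpha, [\D]\rangle = \int_M \hat{A}(M)\,\ch(\alpha) \neq 0$; the odd case is analogous via $K^1$. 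Since $\phi$ is a homotopy equivalence, pick a homotopy inverse $\psi \colon N(\cU) \to M$ and set $\beta := \psi^*\alpha \in K^*(N(\cU))$; then $\phi^*\beta$ agrees with $\alpha$ in $K$-theory, so $\ind(\D_{\phi^*\beta}) = \langle \phi^*\beta,[\D]\rangle \neq 0$. Applying Theorem \ref{thm Lipschitz} represents $\beta$ by a formal difference of $L_{m-1}$-Lipschitz projections $q$ on $N(\cU)$; their pullbacks $p = \phi^*q$ are $L_{m-1} m R^{-1}$-Lipschitz on $M$. After smoothing $p$ by a standard mollify-then-spectrally-project procedure (which preserves Lipschitz control up to a universal factor), the image of $p$ defines a smooth subbundle $E$ of a trivial bundle on $M$, and the associated Grassmann connection has curvature $F^E$ satisfying $\|F^E\| \leq C\|dp\|^2 \leq C\bigl(L_{m-1} m R^{-1}\bigr)^2$ for a universal constant $C$.

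The Lichnerowicz formula for the twisted Dirac operator $\D_E$ then reads $\D_E^2 = (\nabla^E)^*\nabla^E + \kappa_g/4 + \mathcal{R}^E$, where the Clifford contraction $\mathcal{R}^E$ of $F^E$ satisfies $\|\mathcal{R}^E\|_\infty \leq C_l L_{m-1}^2 m^2 R^{-2}$ for some $C_l$ depending only on $l$ through Clifford combinatorics. If $\inf_x \kappa_g(x) > 4\|\mathcal{R}^E\|_\infty$, then $\D_E^2$ is strictly positive, forcing $\ker \D_E = 0$ and hence $\ind(\D_E) = 0$, contradicting the construction of $\beta$. This delivers the bound with $k(R,m,l) := 4 C_l L_{m-1}^2 m^2 / R^2$, increasing in $m$ and proportional to $R^{-2}$. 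The main technical difficulty is the non-triviality of the pairing $\langle \alpha,[\D]\rangle$ for some $\alpha$: this rests on the non-degeneracy of $K$-theoretic Poincaré duality on a closed spin manifold and requires a case split on the parity of $l$. The smoothing of the Lipschitz projection and the curvature estimate $\|F^E\| \lesssim \|dp\|^2$ are standard but need to be carried out carefully to track the universal constants that enter $k(R,m,l)$.
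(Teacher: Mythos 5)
Your proposal follows essentially the same route as the paper's proof: reduce to even dimension, use Atiyah--Singer and the rational Chern character isomorphism to produce a $K$-theory class on $N(\cU)$ whose pullback under $\phi$ pairs non-trivially with $[\D]$, invoke Theorem~\ref{thm Lipschitz} and Lemma~\ref{lem Lipschitz phi} to get an $L_{m-1}mR^{-1}$-Lipschitz representing projection, smooth it (as in Lemma~\ref{lem smooth projection}), bound the curvature of the Grassmann connection by the square of the Lipschitz constant, and feed this into the Lichnerowicz formula to force vanishing of the index once $\kappa_g$ exceeds $k(R,m,l)\sim l^2(mR^{-1}L_{m-1})^2$. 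The paper's version is the same argument carried out with the explicit constants and with a slightly more careful justification of why a single bundle $E=\phi^*F_0$ (rather than the formal difference) suffices for the non-vanishing-index contradiction.
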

The proof of Theorem \ref{thm weaker} uses the following lemma, which allows a continuous Lipschitz projection to be replaced by a smooth Lipschitz projection without changing its $K$-theory class.
%, which allows one to replace Lipschitz projections in $M_n(C(M))$ by smooth Lipschitz projections without changing $K$-theory class.
\begin{lemma}
\label{lem smooth projection}
Let $p$ be an $L$-Lipschitz projection in $M_n(C(M))$ in the sense of Definition \ref{def Lipschitz projection} for some $L>0$ and positive integer $n$. For any $\lambda>0$, there exists a $(2L+\lambda)$-Lipschitz projection $p'\in M_n(C^\infty(M))$ such that $[p]=[p']\in K_0(C(M))$.
\end{lemma}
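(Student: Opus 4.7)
The plan is to approximate $p$ by a smooth self-adjoint element $q$ and then extract a projection from it via holomorphic functional calculus, arranging all the estimates so that the Lipschitz constant of the result is at most $2L+\lambda$. Viewing $p$ as an $L$-Lipschitz self-adjoint matrix-valued function $M \to M_n(\C)$, I would first use a partition of unity on $M$ together with convolution against a smooth mollifier in local charts (or equivalently heat-kernel smoothing) to produce, for any prescribed $\epsilon, \eta > 0$, a smooth self-adjoint $q \in M_n(C^\infty(M))$ satisfying $\|q-p\|_\infty < \epsilon$ and having Lipschitz constant at most $L + \eta$. Choosing $\epsilon < 1/2$ guarantees $\sigma(q(x)) \subset [-\epsilon,\epsilon] \cup [1-\epsilon, 1+\epsilon]$ for every $x \in M$.

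Next, let $\gamma$ be the positively oriented circle of radius $1/2$ about $1 \in \C$, and set
$$p'(x) = \frac{1}{2\pi i} \oint_\gamma (z - q(x))^{-1}\, dz,$$
which the holomorphic functional calculus presents as a smooth projection in $M_n(C^\infty(M))$. Applying the resolvent identity
$$(z - q(x))^{-1} - (z - q(y))^{-1} = (z - q(x))^{-1}(q(y) - q(x))(z - q(y))^{-1}$$
together with the pointwise bound $\|(z-q)^{-1}\| \leq (1/2 - \epsilon)^{-1}$ for $z \in \gamma$, one obtains
$$\|p'(x) - p'(y)\| \leq \frac{1}{2(1/2 - \epsilon)^2}\,(L + \eta)\,d_M(x,y).$$
The prefactor $1/[2(1/2-\epsilon)^2]$ tends to $2$ as $\epsilon \to 0$, so by choosing $\epsilon$ and $\eta$ sufficiently small, the Lipschitz constant of $p'$ is at most $2L + \lambda$.

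To identify the $K$-theory classes, note that $\|p' - p\|$ is small once $\epsilon$ is small, since $p'$ is norm-close to $q$ (the functional calculus applied to a true projection returns itself) and $q$ is norm-close to $p$. One can then connect $p$ and $p'$ by the homotopy $p_t = \frac{1}{2\pi i}\oint_\gamma (z - (tp + (1-t)q))^{-1}\, dz$ for $t \in [0,1]$, which is well-defined because $\|(tp + (1-t)q) - p\| \leq \epsilon < 1/2$ keeps the spectrum separated from $\gamma$ uniformly in $t$. Thus $[p] = [p'] \in K_0(C(M))$.

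The main obstacle I anticipate is in the mollification step: one needs $q$ to be smooth, globally norm-close to $p$, and with Lipschitz constant arbitrarily close to $L$. On Euclidean space convolution with a smooth bump preserves the Lipschitz constant exactly and norm-approximates at any scale, but on a general Riemannian manifold this must be patched using a partition of unity at a scale fine enough that the chart-level convolutions increase the Lipschitz constant by only $O(\eta)$. This is where the slack $\eta$ in the estimate is absorbed, and it is the technical heart of the proof.
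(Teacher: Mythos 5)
Your argument follows the paper's proof essentially step for step: mollify $p$ to a smooth self-adjoint approximant, extract a projection by a contour integral around $1$, bound its Lipschitz constant via the resolvent identity and the estimate $\|(z-q)^{-1}\|\leq(\tfrac12-\epsilon)^{-1}$ on $\gamma$, and conclude $[p]=[p']$ from norm-proximity/homotopy. The small differences — you control the spectrum via Weyl perturbation rather than the $\|f_\varepsilon^2-f_\varepsilon\|$ bound, you budget an extra $\eta$ in the Lipschitz constant of the mollification (which the paper tacitly assumes costs nothing), and you write out the contour homotopy explicitly — are cosmetic refinements of the same proof.
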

\begin{proof}
	First, for any $\varepsilon>0$, we can find a self-adjoint $f_\varepsilon\in M_n(C^\infty(M))$ which is $L$-Lipschitz and such that $\norm{f_\varepsilon-p}\leq\varepsilon$. We see that
	\begin{align*}
	\norm{f_\varepsilon^2-f_\varepsilon}&=\norm{f_\varepsilon^2+(p-f_\varepsilon)+(p^2-p)-p^2}\\
	&\leq\norm{f_\varepsilon^2-p^2}+\varepsilon\\
	&\leq\norm{f_\varepsilon-p}\norm{f_\varepsilon+p}+\varepsilon\\
	&\leq\varepsilon(3+\varepsilon),
	\end{align*}
which we may assume is at most $1/16$. Letting $\delta_\varepsilon=\sqrt{\varepsilon(3+\varepsilon)}$, one sees that the spectrum of $f_\varepsilon$ is contained in the disjoint union of the open balls of radii $\sqrt{\delta_\varepsilon}$ around $0$ and $1$. Let $\Theta$ be the characteristic function of the set $\{z\in\C\colon\Real z\geq 3/8\}\subseteq\C$. Let $E$ be the closed disk of radius $1/2$ and center $1$, and let $\Gamma=\partial E$, positively oriented. Then $\Theta$ is holomorphic on an open neighborhood of $E$, and we may apply the holomorphic functional calculus to obtain a smooth projection
\begin{equation}
\label{eq Cauchy}	
\Theta(f_\varepsilon)=\frac{1}{2\pi i}\int_\Gamma(\xi-f_\varepsilon)^{-1}\,d\xi\in M_n(C^\infty(M)).
\end{equation}
In order to estimate the Lipschitz constant for $\Theta(f_\varepsilon)$, observe that
$$\inf_{\xi\in\Gamma}\rho(\xi-f_\varepsilon)\geq\frac{1}{2}-\delta_\varepsilon,$$
where $\rho$ denotes the spectral radius. Hence $\norm{(\xi-f_\varepsilon)^{-1}}\leq 2(1-2\delta_\varepsilon)^{-1}$, and for all $x,y\in M$, the norm of $(\xi-f_\varepsilon(x))^{-1}-(\xi-f_\varepsilon(y))^{-1}$ in $M_n(\C)$ is bounded by
\begin{align*}
%	\left\|(\xi-f_\varepsilon(x))^{-1}-(\xi-f_\varepsilon(y))^{-1}\right\|_{M_n(\C)}
	%&=\left\|\frac{f(x)-f(y)}{(\xi-f(x))(\xi-f(y))}\right\|\\
%	&\leq 
	L\norm{(\xi-f_\varepsilon)^{-1}}^2\cdot d_M(x,y)&\leq\frac{4L}{(1-2\delta_\varepsilon)^2}\cdot d_M(x,y),
\end{align*}
where 
%$\norm{\cdot}_{M_n(\C)}$ denotes the operator norm on matrices and 
$d_M$ is the Riemannian distance on $M$.
Thus $(\xi-f_\varepsilon)^{-1}$ is $4L(1-2\delta_\varepsilon)^{-2}$-Lipschitz. It follows from \eqref{eq Cauchy} that $\Theta(f_\varepsilon)$ is $2L(1-2\delta_\varepsilon)^{-2}$-Lipschitz. Thus for any $\lambda>0$, there exists an $\varepsilon$ such that $\Theta(f_\varepsilon)$ is a $(2L+\lambda)$-Lipschitz projection in $M_n(C^\infty(M))$. 

Now $\lim_{\varepsilon\to 0}\norm{f_\varepsilon-\Theta(f_\varepsilon)}=\lim_{\varepsilon\to 0}\sqrt{\delta_\varepsilon}=0.$ As $\norm{f_\varepsilon-p}\leq\varepsilon$, this implies that for $\varepsilon$ small enough, $p'\coloneqq\Theta(f_\varepsilon)$ and $p$ define the same class in $K_0(C(M))$. 
\end{proof}

\begin{proofof}{Theorem \ref{thm weaker}}
We may suppose that $l$ is even; the odd case follows by applying the result in the even case to $M\times S^1$ with the product metric. Let $D$ be the Dirac operator on the spinor bundle $S\to M$, and let $\nabla_S$ denote the spinor connection. We first claim that there exists a smooth Hermitian vector bundle $(E,\nabla_E)$ over $M$ such that:
\begin{itemize}[noitemsep]
	\item $E$ is the pull-back of of a vector bundle $F$ over $N(\cU)$;
	\item the twisted Dirac operator $D_E=D\otimes\nabla_E$ has non-vanishing Fredholm index. 
\end{itemize}
Before justifying this claim, let us fix some notation. Let $\Phi^*\colon H^*(N(\cU))\to H^*(M)$ be the isomorphism on cohomology induced functorially by the map $\phi$ from \eqref{eq phi}. Let
\begin{align*}
\ch_M\colon K^0(M)\otimes\Q&\to H^{\textnormal{even}}(M;\Q),\\
\ch_N\colon K^0(N(\cU))\otimes\Q&\to H^{\textnormal{even}}(N(\cU);\Q),
\end{align*}
be the isomorphisms induced by the Chern character. Let $\langle\cdot,\cdot\rangle_H$ denote the natural pairing between cohomology and homology and $[M]\in H_l(M;\Q)$ the rational fundamental class determined by the spin structure on $M$.

For the claim, note that there exists some $x\in H^l(M)$ such that 
$$\big\langle x,[M]\big\rangle_H\neq 0.$$ 
Let $y=(\Phi^*)^{-1}(x)\in H^l(N(\cU))$. Then there exist $a\in K^0(N(\cU))$ and an integer $c$ such that $\ch_N(a\otimes 1)=(\Phi^*)^{-1}(cx)$.
%Then $a\otimes 1=\ch_N^{-1}(cy)$, and we have
Thus
\begin{align*}
\big\langle\widehat{A}(M)\cup
\ch_M(\phi^*a),[M]\big\rangle_H&=\big\langle\widehat{A}(M)\cup\Phi^*(\ch_N(a)),[M]\big\rangle_H\\
&=\big\langle\widehat{A}(M)\cup\Phi^*(cy),[M]\big\rangle_H\\
&=\big\langle\widehat{A}(M)\cup cx,[M]\big\rangle_H\\
&=c\big\langle x,[M]\big\rangle_H\neq 0.
\end{align*}
Finally, write $a$ as $[F_0]-[F_1]$ for some vector bundles $F_0$ and $F_1$ over $N(\cU)$. Without loss of generality, we may then take $E=\phi^*F_0$, which can be assumed to be smooth, equipped with any Hermitian connection $\nabla_E$. It then follows from the Atiyah-Singer index theorem that the Fredholm index of $D_E$ is non-zero.

Having established this, let $\mathcal{U}=\{U_i\}_{i\in I}$ be a good cover of $M$ with Lebesgue number $R$ and $R$-multiplicity $m$, which exists by hypothesis. Let $\phi\colon M\to N(\cU)$ be the homotopy equivalence \eqref{eq phi}. By Theorem \ref{thm Lipschitz}, we may assume that the bundle $F_0$ is given by a projection that is $L_{(m-1)}$-Lipschitz in the sense of Definition \ref{def Lipschitz K-theory}. By Lemma \ref{lem Lipschitz phi}, the map $\phi$ is $mR^{-1}$-Lipschitz, so the bundle $E=\phi^*F_0$ is given by an $mR^{-1}L_{m-1}$-Lipschitz projection $p_E\in M_n(C(M))$ for some $n$. Further, by Lemma \ref{lem smooth projection} below, we may assume that $p_E$ is a smooth $(2mR^{-1}L_{m-1}+\lambda)$-Lipschitz projection for some arbitrarily small $\lambda>0$.
%To be more precise, there exists a projection $p_E^\infty\in M_n(C^\infty(M))$ which is $mR^{-1}L_{m-1}$-Lipschitz and within some small $\varepsilon$ of $p_E$ uniformly. This does not change the $K$-theory class.

Endow the trivial bundle $M\times \C^n$ with the connection $d_n=d\otimes I_n$, and $E$ with the connection $\nabla_E=p_E d_np_E$. Then the curvature of $E$ can be written as
%We can write
%$$D_E = p_E(D\otimes I_n)p_E,$$
%where $D\otimes I_n$ is the amplification of $D$ by the trivial rank-$n$ bundle over $M$.
$$R^E=(d_np_E)(I_n-p_E)(d_np_E),$$
and it follows from the above discussion that 
%Together with the fact that $p_E$ is $mR^{-1}L_{m-1}$-Lipschitz, this implies that
\begin{equation}
\label{eq curvature bound}
\norm{R^E}\leq(2mR^{-1}L_{m-1}+\lambda)^2.	
\end{equation}
The Bochner-Lichnerowicz formula \cite[p. 164]{Lawson-Michelsohn} for $D_E^2$ states that
\begin{equation}
\label{eq D squared}
D_E^2=\nabla^*_E\nabla_E+\frac{\kappa_g}{4}+\mathfrak{R}^E,
\end{equation}
where $\mathfrak{R}^E$ is an endomorphism of $S\otimes E$ that locally takes the form
$$\frac{1}{2}\sum_{i,j=1}^{l}c(e_i)c(e_j)R^E(e_i,e_j).$$
Here, $c(e_i)$ denotes Clifford multiplication by an element $e_i$ of an orthonormal tangent frame. It follows that the operator norm of $\mathfrak{R}^E$ is bounded by $\frac{l^2}{2}\norm{R^E}$. Combining this with \eqref{eq curvature bound} and \eqref{eq D squared} gives
\begin{align}
\label{eq Lichnerowicz bound}
D_E^2&\geq\frac{\kappa_g}{4}-2l^2(mR^{-1}L_{m-1}+\tfrac{\lambda}{2})^2.
\end{align}
%for some constant $C$ independent of $M$.
%where $\nabla$ is the connection $\nabla_S\otimes 1+1\otimes\nabla_E$ on $S\otimes E$.
%By the above discussion,
%$$\|[D\otimes I_n,p_E]\|\leq mR^{-1}L_{m-1}.$$
Define $k(R,m,l)=8l^2(mR^{-1}L_{m-1})^2$, and suppose $\kappa_g(x)>k(R,m,l)$ for all $x\in M$. Since $\lambda$ can be taken to be arbitrarily small, \eqref{eq Lichnerowicz bound} implies $D_E$ is invertible, which is a contradiction as $D_E$ has non-vanishing index by construction.
\end{proofof}
\begin{remark}
The above argument can be adapted to the non-compact case by means of Gromov-Lawson's relative index theorem \cite{Gromov-Lawson2}.
\end{remark}

%\rede{Question: why is $k(m,r)$ in the non-compact case the square of what it is in the compact case?}

\section{A pairing between quantitative and Lipschitz $K$-theories}
\label{sec quantitative}
\noindent The proof of Theorem \ref{thm weaker} in the closed case given in section \ref{sec closed} essentially used a pairing between the Dirac operator $D$ with certain Lipschitz vector bundles over $M$ to detect non-vanishing of the index. The proof of Theorem \ref{thm main} involves an analogous argument carried out using quantitative $K$-theory. More specifically, we will detect non-vanishing of the \emph{quantitative higher index} of $D$ by pairing it with elements from the Lipschitz-controlled topological $K$-theory of $M$.

%The pairing  pairing is an extension of the natural pairing between $K$-homology and $K$-theory of spaces. Recall that $K$-homology can be realised as the $(\varepsilon,r,N)$ quantitative $K$-theory of $C^*(M)$ with fixed $\varepsilon$ and $N$ in the limit $r\to 0$, a fact that can be seen from the localization algebra picture of $K$-homology.

%However, as shown in \cite{GXY3}, quantitative $K$-theory of $C^*(M)$ is a receptacle for a refinement of the higher index that acts as an obstruction to the size of positive scalar curvature. Together with the pairing mentioned above, this gives a short proof of Theorem \ref{thm main}.
\subsection{Preliminaries}

\noindent We first recall the set-up of quantitative $K$-theory. It is helpful to start with the general notion of a geometric $C^*$-algebra \cite{Oyono2}.
\begin{definition}
\label{def geometric algebra}
A $C^*$-algebra $A$ is said to be \emph{geometric} if it admits a filtration $\{A_r\}_{r>0}$, where each $A_r$ is a closed linear subspace, satisfying:
\begin{enumerate}[(i)]
\item $A_r\subseteq A_{r'}$ if $r\leq r'$;
\item $A_r A_{r'}\subseteq A_{r+r'}$;
\item $\displaystyle\bigcup_{r>0} A_r$ is dense in $A$.
\end{enumerate}
An element $a\in A_r$ is said to have \emph{propagation at most $r$}, and we write
$$\textnormal{prop}(a)\leq r.$$
\end{definition}
If $A$ is a geometric $C^*$-algebra, each matrix algebra $M_n(A)$ is a geometric $C^*$-algebra with filtration $\{M_n(A_r)\}_{r>0}.$ If $A$ is non-unital, then its unitization $A^+$ is a geometric $C^*$-algebra with filtration $\{A_r\oplus\mathbb{C}\}_{r>0}.$ 

The particular example of a geometric $C^*$-algebra that we will be interested in is the Roe algebra, which we now recall. Let $(M,g)$ be a Riemannian spin manifold of positive dimension, $S\to M$ the spinor bundle, and $d_M$ the Riemannian distance.
	
	\begin{definition}
	\label{def:suppprop}
		Given a bounded operator $T$ on $L^2(S)$, we say that:
		\begin{itemize}
			\item The \emph{support} of $T$, denoted $\textnormal{supp}(T)$, is the complement of all $(x,y)\in X\times X$ for which there exist $f_1,f_2\in C_0(X)$ such that $f_1(x)\neq 0$, $f_2(y)\neq 0$, and
			$$f_1Tf_2=0;$$
			\item The \textit{propagation} of $T$ is the extended real number $$\textnormal{prop}(T)=\sup\{d_M(x,y)\,|\,(x,y)\in\textnormal{supp}(T)\};$$
			\item $T$ is \textit{locally compact} if $fT$ and $Tf\in\mathcal{K}(L^2(S))$ for all $f\in C_0(M)$.
%			\item For $Z$ a $ G$-invariant closed subset of $X$, $T$ is \emph{supported near $Z$} if there exists $r>0$ such that for all $f \in C_0(M)$ whose support is at least a distance $r$ away from $Z$, we have $fT = Tf = 0$.
		\end{itemize}
		The \emph{algebraic Roe algebra}, denoted by $\C[M]$, is the $*$-subalgebra of $\mathcal{B}(\mathcal{H})$ consisting of locally compact operators with finite propagation. The \emph{Roe algebra} of $M$, denoted by $C^*(M)$, is the completion of $\C[M]$ with respect to the operator norm. 
	\end{definition}
	The Roe algebra $C^*(M)$ is a geometric $C^*$-algebra in the sense of Definition \ref{def geometric algebra} with respect to the filtration $\{\mathbb{C}[M]_r\}_{r>0}$, where
\begin{equation}
\label{eq filtration}
\mathbb{C}[M]_r\coloneqq\{T\in\mathbb{C}[M]\colon\textnormal{prop}(T)\leq r\}.
\end{equation}

%	\begin{definition}
%	\label{def localization}
%		Let $\mathcal{L}$ be the $*$-algebra of functions $f\colon [0,\infty)\rightarrow C(M)$ that are uniformly bounded, uniformly continuous, and such that
%		$$\textnormal{prop}(f(t))\rightarrow 0\quad\textnormal{as }t\rightarrow\infty.$$
%		\begin{enumerate}[(i)]
%			\item The \emph{localization algebra}, denoted by $C^*_L(M)$, is the $C^*$-algebra obtained by completing $L$ with respect to the norm $$\|f\|\coloneqq\sup_t\|{f(t)}\|_{\mathcal{B}(L^2(M))};$$
%			\item The map $\mathcal{L}\rightarrow \mathbb{C}[M]$ given by $f\mapsto f(0)$ extends to the \emph{evaluation map} $$\textnormal{ev}\colon C^*_L(M)\rightarrow C^*(M).$$
%		\end{enumerate}
%	\end{definition}

%The Roe algebra $C^*(M)$, along with matrix algebras over it, admits a natural filtration given by propagation. This allows one to define its quantitative $K$-theory. For our purposes it will be convenient to use the following definition \cite{Chung}, which we need only in the even-degree case.
The geometric structure of a $C^*$-algebra allows one to define its \emph{quantitative $K$-theory} groups, which are refinements of the usual operator $K$-theory groups. For this paper, it will suffice to limit ourselves to the even-degree case. We will find it convenient to use the picture of quantitative $K$-theory in terms of quasiidempotents instead of quasiprojections, as done in \cite{Chung}.
\begin{definition}
\label{def quantitative K}
Let $A$ be a unital geometric $C^*$-algebra. Suppose we have real numbers $\varepsilon,r>0$, and $N\geq 1$. An element $e\in A$ is called an \emph{$(\varepsilon,r,N)$-quasiidempotent} if
$$\norm{e^2-e}<\varepsilon,\qquad e\in A_r,\qquad\max(\norm{e},\norm{1_{A}-e})\leq N.$$
Suppose now $0<\varepsilon<\frac{1}{20}$. Let $\textnormal{Idem}_n^{\varepsilon,r,N}(A)$ denote the subspace of $(\varepsilon,r,N)$-quasiidempotents in $M_n(A)$. We have for each positive integer $n$ an injection 
$$\textnormal{Idem}^{\varepsilon,r,N}_n(A)\hookrightarrow\textnormal{Idem}^{\varepsilon,r,N}_{n+1}(A)$$ given by inclusion into the upper-left corner, with respect to which we define 
$$\textnormal{Idem}_\infty^{\varepsilon,r,N}(A)=\varinjlim \textnormal{Idem}^{\varepsilon,r,N}_n(A),$$
where the direct limit is taken in the category of topological spaces.
Define an equivalence relation $\sim$ on $\textnormal{Idem}_\infty^{\varepsilon,r,N}(A)$ by declaring $e\sim f$ if $e$ and $f$ are homotopic through $(4\varepsilon,r,4N)$-quasiidempotents. Denote the equivalence class of an element $e\in\textnormal{Idem}_\infty^{\varepsilon,r,N}(A)$ by $[e]$. Define
$$[e]+[f]=\left[\begin{pmatrix}e&0\\0&f\end{pmatrix}\right].$$
Under this operation, $\textnormal{Idem}_\infty^{\varepsilon,r,N}(A)/\sim$ becomes an abelian monoid with identity $[0]$. Let $K_0^{\varepsilon,r,N}(A)$ be its Grothendieck completion.
\end{definition}
\begin{remark}
\label{rem nonunital}
If $A$ is non-unital, then we have a homomorphism
$$\pi_*\colon K_0^{\varepsilon,r,N}(A^+)\to K_0^{\varepsilon,r,N}(\mathbb{C})$$
induced by the canonical $*$-homomorphism $\pi\colon A^+\to\mathbb{C}$. In this case, we define $K_0^{\varepsilon,r,N}(A)=\ker(\pi_*)$.
\end{remark}

Quantitative operator $K$-theory can be related to the usual operator $K$-theory as follows. If $e$ is an $(\varepsilon,r,N)$-quasiidempotent in a unital $C^*$-algebra $A$ with $\varepsilon<\frac{1}{4}$, then the spectrum of $e$ is contained in the union of disjoint balls $B_{\sqrt{\varepsilon}}(0)\cup B_{\sqrt{\varepsilon}}(1)\subseteq\mathbb{C}$. Choose a function $f_0$ that is holomorphic on a neighborhood of the spectrum and such that
$$
f_0(z)\equiv
\begin{cases}
0&\textnormal{ if }z\in\overline{B}_{\sqrt{\varepsilon}}(0),\\
1&\textnormal{ if }z\in\overline{B}_{\sqrt{\varepsilon}}(1).
\end{cases}
$$
Let $\gamma$ be the contour
$$\{z\in\mathbb{C}\colon|z|=\sqrt{\varepsilon}\}\cup\{z\in\mathbb{C}\colon|z-1|=\sqrt{\varepsilon}\}$$
containing the spectrum of $e$.
Applying the holomorphic functional calculus, we obtain an idempotent
\begin{equation*}
\label{eq f0}
f_0(e)=\frac{1}{2\pi i}\int_{\gamma}f_0(z)(z-e)^{-1}\,dz.
\end{equation*}
(This procedure applies similarly to $(\varepsilon,r,N)$-quasiidempotents in matrix algebras.) The assignment $e\mapsto f_0(e)$ induces a group homomorphism
\begin{align}
\label{eq kappa}
\kappa\colon K_0^{\varepsilon,r,N}(A)\to K_0(A).
\end{align}
(See \cite[Proposition 3.19]{Chung}.) 

Next, let us review \emph{Lipschitz controlled $K$-theory}, developed in \cite{WXYdecay}, in the special case that we will need.

\begin{definition}[{\cite[section 4]{WXYdecay}}]
\label{def Lipschitz K-theory}
Let $M$ be a Riemannian manifold with distance $d_M$. For any $L>0$ and positive integer $n$, let $P_n^L(C_0(M)^+)$ denote the subspace of $L$-Lipschitz projections in $M_n(C_0(M)^+)$ in the sense of Definition \ref{def Lipschitz projection}.

We have for each $n$ an injection
$$P_n^L(C_0(M)^+)\hookrightarrow P_{n+1}^L(C_0(M)^+)$$
given by inclusion into the upper-left corner, with respect to which we define
$$P^L_\infty(C_0(M)^+)=\varinjlim P^L_n(C_0(M)^+),$$
where the direct limit is taken in the category of topological spaces.
Define an equivalence relation $\sim$ on $P^L_\infty(C_0(M)^+)$ by declaring $p\sim q$ if $p$ and $q$ are homotopic through elements in $P_\infty^{2L}(C_0(M)^+)$.

 Denote the equivalence class of an element $p\in P^L_\infty(C_0(M)^+)$ by $[p]$. Define
$$[p]+[q]=\left[\begin{pmatrix}p&0\\0&q\end{pmatrix}\right].$$
Under this operation, $P^L_\infty(C_0(M)^+)/\sim$ becomes an abelian monoid with identity $[0]$. Let $K_0^L(C_0(M)^+)$ be its Grothendieck completion.

The canonical $*$-homomorphism $\pi\colon C_0(M)^+\to\mathbb{C}$ induces a homomorphism $\pi_*\colon K_0^L(C_0(M)^+)\to\Z$. Define $K_0^L(C_0(M))=\ker(\pi_*)$.

\end{definition}

%\subsection{Quantitative higher index}
Finally, we review the notion of the quantitative higher index. This concept was introduced in \cite{GXY3}, inspired by questions of Gromov on the geometric size of scalar curvature \cite{GromovLectures}. In the spin setting, it captures the interplay between scalar curvature on the one hand and propagation of index-theoretic information on the other.

Let $M$ be an even-dimensional Riemannian spin manifold with Dirac operator $D$. 
	
	\begin{definition}
	\label{def qhi even}	
	Fix $0<\varepsilon<\tfrac{1}{20}$, $r>0$, and $N\geq 7$. Let $\chi$ be a normalising function, i.e. a continuous odd function $\R\to\R$ such that $\lim_{x\to\infty}\chi(x)=1$, satisfying
	\begin{equation}
	\supp\widehat{\chi}\subseteq\left[-\tfrac{r}{5},\tfrac{r}{5}\right]\quad\textnormal{and}\quad\norm{\chi}_\infty\leq 1,
	\end{equation}
	where $\widehat{\chi}$ is the distributional Fourier transform of $\chi$. With respect to the $\Z_2$-grading on $S=S^+\oplus S^-$, we can write
$$D=\begin{pmatrix}0&D^-\\D^+&0\end{pmatrix},\quad\chi(D)=\begin{pmatrix}0&V\\U&0\end{pmatrix}.$$
Let
			\[W=
			\begin{pmatrix} 
			1 & V\\ 
			0 & 1
			\end{pmatrix}
			\begin{pmatrix} 
			1 & 0\\ 
			-U & 1
			\end{pmatrix}
			\begin{pmatrix} 
			1 & V\\
			0 & 1
			\end{pmatrix}
			\begin{pmatrix}
			0 & -1\\
			1 & 0
			\end{pmatrix},
			\quad e_{1,1}=\begin{pmatrix}
			1&0\\0&0
			\end{pmatrix},
			\]
			and
			\begin{align*}
			P_\chi(D)&=We_{1,1}W^{-1}=\begin{pmatrix}1-(1-VU)^2 & (2-VU)V(1-UV)\\U(1-VU)&(1-UV)^2\end{pmatrix}.
			\end{align*}
			Then $P_\chi(D)$ and $e_{1,1}$ belong to $\textnormal{Idem}^{\varepsilon,r,N}_2(C^*(M)^+)$; see for example \cite[section 2.8]{WillettYu}.	
	The \emph{$(\varepsilon,r,N)$-quantitative higher index} of $D$ is the element
\begin{equation}
\label{eq index}
\Ind^{\varepsilon,r,N}(D)=\left[P_\chi(D)\right]-
			\left[e_{1,1}\right]\in K_0^{\varepsilon,r,N}(C^*(M)).
\end{equation}
\end{definition}
\subsection{Pairing quantitative and Lipschitz $K$-theories}
We now define a pairing between even quantitative $K$-theory of the Roe algebra and Lipschitz topological $K$-theory of $M$ taking values in $K_0(\cK)\cong\Z$:
%$\Z$-bilinear map
$$\langle\cdot,\cdot\rangle_{\varepsilon,r,N,L}\colon K_0^{\varepsilon,r,N}(C^*(M))\times K_0^L(C_0(M))\to K_0(\cK)$$
that is well-defined under suitable conditions on the parameters $\varepsilon, r, N,$ and $L$ (see Theorem \ref{thm pairing defined}). This is a quantitative analogue of the natural pairing between $K$-homology and topological $K$-theory, and indeed factors through the homomorphism
$$\kappa\colon K_0^{\varepsilon',r',N'}(\cK)\to K_0(\cK)$$
given in $\eqref{eq kappa}$, where the parameters $\varepsilon'$, $r'$, and $N'$ are specified in Theorem \ref{thm pairing defined}.
\begin{remark}
Although our exposition focuses on the case of Riemannian spin manifolds, this pairing makes sense for metric spaces.	
\end{remark}

To give the basic idea behind this pairing, let us write it down in its most rudimentary form. Suppose $\alpha\in K_0^{\varepsilon,r,N}(C^*(M))$ is represented by a single $(\varepsilon, r,N)$-quasiidempotent $T\in C^*(M)$ and $\beta\in K_0^L(C_0(M))$ is represented by a single $L$-Lipschitz projection $p\in M_n(C_0(M))$. Let $T_n=T\otimes I_n$ denote the $n$-fold amplification of $T$. Then local compactness of $T$ implies that $T_n p\in M_n(\mathcal{K})$, while it can be shown that, by taking $L$ sufficiently small, the norm of the commutator $[T_n,p]$ can be made arbitrarily small, for fixed $\varepsilon$, $r$, and $N$. It follows that $T_n p$ is an $(\varepsilon',r,N')$-quasiidempotent for some new parameters $\varepsilon'$ and $N'$ (see Corollary \ref{cor basic}). The pairing then sends
\begin{equation}
\label{eq basic pairing}	
(\alpha,\beta)\mapsto[T_n p]\mapsto\kappa\big([T_np]\big),
\end{equation}
where $\kappa$ is as in \eqref{eq kappa}.

To treat the general case where $\alpha$ and $\beta$ are formal differences of matrices, we will need two ingredients. The first is the following estimate for commutators of amplified operators and matrix-valued functions. This is a generalization of \cite[Lemma 6.1.2]{WillettYu}.
\begin{proposition}
\label{prop commutator}
	Let $M$ and $S$ be as above. Let $T\in M_m(\mathcal{B}(L^2(S)))$ for some $m$ be an element with finite propagation. Let $f\colon M\to M_n(\mathbb{C})$ be a uniformly bounded $L$-Lipschitz map with respect to the operator norm on $M_n(\C)$. Then
	$$\norm{[T\otimes I_n,I_m\otimes f]}\leq 8L\cdot\textnormal{prop}(T)\norm{T},$$
	where $\otimes$ denotes the Kronecker product of matrices.
%Note here the operators do not obviously commute because the tensor product refers to a $\C$-linear, not $C_b(M)$-linear, tensor product. This introduces ambiguity into the order of operations. We take the tensor product first before letting the operators act. For example, $(D\otimes 1)(s\otimes t)=D(st)\neq (Ds\otimes t)$.
\end{proposition}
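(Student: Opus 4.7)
The plan is to adapt the proof of \cite[Lemma 6.1.2]{WillettYu}, which treats the scalar case $m=n=1$, to the tensor-product setting here. The key new ingredient is an algebraic observation: the specific tensor structure of the two operators makes constant values of the multiplication operator commute with $T\otimes I_n$, so the scalar proof carries over essentially verbatim.

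Setting $\widetilde{T} := T \otimes I_n \in M_{mn}(\cB(L^2(S)))$ and $F := I_m \otimes f : M \to M_{mn}(\C)$, I first observe that
\[(T \otimes I_n)(I_m \otimes f(x)) = T \otimes f(x) = (I_m \otimes f(x))(T \otimes I_n)\]
for each fixed $x \in M$, since each factor contains the identity in the slot where the other acts nontrivially. Consequently, for any $\xi \in L^2(S, \C^{mn})$ and $x \in M$,
\[
[\widetilde{T}, F]\xi(x) = \widetilde{T}\bigl((F - F(x))\xi\bigr)(x),
\]
where $F - F(x)$ denotes the matrix-valued function $y \mapsto F(y) - F(x)$. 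Multiplicativity of Kronecker norms, $\|A \otimes B\|_{op} = \|A\|\|B\|$, gives $\|\widetilde{T}\| = \|T\|$ and shows $F$ remains $L$-Lipschitz in operator norm since $\|F(x) - F(y)\|_{op} = \|f(x) - f(y)\|_{op}$. Moreover, $f_1(T\otimes I_n)f_2 = (f_1 T f_2) \otimes I_n$ for $f_1, f_2 \in C_0(M)$, so $\textnormal{prop}(\widetilde{T}) = \textnormal{prop}(T) =: r$. Finite propagation restricts the effective support of $F - F(x)$ on the right-hand side of the pointwise identity to $B_r(x)$, on which $\|F - F(x)\|_{op} \leq Lr$, placing us in the exact setup of \cite[Lemma 6.1.2]{WillettYu}: the operator-norm estimate then follows by the same partition-of-unity and localization scheme, yielding $\|[\widetilde{T}, F]\| \leq 8Lr\|\widetilde{T}\|$. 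The constant $8$ is obtained by tracking factors through the triangle-inequality and cover-overlap steps.

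The main obstacle is purely conceptual: for a general matrix-valued Lipschitz function $G: M \to M_k(\C)$ and a general $T' \in M_k(\cB(L^2(S)))$, constant values $G(x)$ need not commute with $T'$, which would invalidate the pointwise identity underlying the scalar proof. The tensor-product form $F = I_m \otimes f$ is precisely what circumvents this: the factor $I_m$ absorbs the $M_m$-structure of $T$, leaving only the $M_n$-structure of $f$, which commutes with $T \otimes I_n$. Thus the matrix case reduces to the scalar argument without incurring any dimension-dependent penalty such as a factor of $\sqrt{mn}$.
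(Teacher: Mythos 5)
Your key observations are all correct: for fixed $x$ the operator $T\otimes I_n$ commutes with the constant matrix $I_m\otimes f(x)$; this gives the pointwise identity $[\widetilde{T},F]\xi(x)=\widetilde{T}\bigl((F-F(x))\xi\bigr)(x)$; the Kronecker product preserves the operator norm so that $\|\widetilde{T}\|=\|T\|$ and $F$ stays $L$-Lipschitz; and $\mathrm{prop}(\widetilde{T})=\mathrm{prop}(T)$. But there is a genuine gap in the final step, where you assert that ``the operator-norm estimate then follows by the same partition-of-unity and localization scheme'' as in the scalar case. The proof of \cite[Lemma 6.1.2]{WillettYu} does not proceed by a partition of unity: it approximates a real scalar function $f$ by the step function $g=\sum_k kr\,\chi_{f^{-1}[kr,(k+1)r)}$, and its whole force is that the level sets of a \emph{scalar} function are linearly ordered, so $\chi_k T\chi_l=0$ for $|k-l|>1$ and the commutator $[T,g]$ splits into exactly two off-diagonal sums with pairwise orthogonal summands. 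For a matrix-valued $f$ there is no analogous one-dimensional level-set decomposition of $M$; the naive multi-index analogue $M_{\vec{k}}=\bigcap_{i,j}f_{i,j}^{-1}[k_{ij}\delta',(k_{ij}+1)\delta')$ has $3^{n^2}-1$ ``adjacent'' neighbors per cell rather than two, which destroys the constant. Likewise, a geometric partition of $M$ into pieces of diameter $\lesssim\mathrm{prop}(T)$ would give a bound depending on the multiplicity of such a cover, i.e.\ on the bounded geometry of $M$, not a universal constant.

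The pointwise identity by itself also does not yield an operator-norm estimate: it suggests a ``kernel bound'' $\|K(x,y)(F(y)-F(x))\|\le Lr\|K(x,y)\|$, but the operator norm is not controlled by pointwise bounds on the kernel without an additional Schur-type argument, and $T$ need not have an integral kernel at all. So the reduction to the scalar setup has not actually been carried out. To complete the argument you need to do what the paper does: decompose $I_m\otimes f=\sum_{i,j}(f_{i,j}I_m)\otimes E^{i,j}$, run the level-set/step-function argument of \cite[Lemma 6.1.2]{WillettYu} on each scalar entry $f_{i,j}$ (which is automatically $L$-Lipschitz since $|a_{ij}|\le\|A\|_{\mathrm{op}}$), and then recombine the $n^2$ commutators $[T,f_{i,j}]\otimes E^{i,j}$ via the bound $\|\sum_{i,j}A_{i,j}\otimes E^{i,j}\|\le n\max_{i,j}\|A_{i,j}\|$. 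Your parenthetical claim that there is ``no dimension-dependent penalty such as a factor of $\sqrt{mn}$'' is also asserted rather than proved; the recombination step above does introduce a factor of $n$, and whether and how it can be cancelled against the bin width is exactly the delicate bookkeeping that the proof has to address.
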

\begin{remark}
Recall that the Kronecker product of an $m\times n$ matrix $A$ (with entries $a_{ij}$) and a $p\times q$ matrix $B$ is the $pm\times qn$ matrix
$$A\otimes B=\begin{pmatrix}a_{11}B & \dots & a_{1n} B\\\vdots&\ddots&\vdots\\a_{m1}B & \dots & a_{mn}B\end{pmatrix}.$$	
\end{remark}

%\begin{remark}
%Here $T\otimes I_n$ refers to the Kronecker product of matrices, i.e. the diagonal amplification of $T$ by $I_n$. On the other hand, the notation $1\otimes f$ refers to an element of $\mathcal{B}(L^2(S))\otimes\mathcal{B}(L^2(\C^n))$. In particular, if we let $f_{i,j}$ denote the components of $f$, then the $(i,j)$-th component of $[T\otimes I_n,1\otimes f]$ is $[T,f_{i,j}]$.
%\end{remark}

\begin{proofof}{Proposition \ref{prop commutator}}
	This is a multi-dimensional version of the proof of \cite[Lemma 6.1.2]{WillettYu}. For convenience, set $\delta=L\cdot\textnormal{prop}(T).$ By considering the real and imaginary parts of $f$ separately, it suffices to prove that for $f\colon M\to M_n(\R)$ a bounded continuous $L$-Lipschitz map we have
		$$\norm{[T\otimes I_n,I_m\otimes f]}\leq 4\delta\norm{T}.$$
%		For each $\mathbf{k}=(k_1,\ldots,k_n^2)\in\Z^{n^2}$, let $C_{\mathbf{k}}$ denote the cube
%		$$[k_1,k_1+\delta n^{-1})\times[k_2,k_2+\delta n^{-1})\times\cdots\times[k_{n^2},k_{n^2}+\delta n^{-1})\subseteq\R^{n^2}.$$

Let $f_{i,j}\colon M\to\R$ denote the various matrix components of the function $f$, for $i,j\in\{1,\ldots n\}$. For each $k\in\Z$, let
$$M_{i,j,k} = f_{i,j}^{-1}\left[k\delta n^{-1},(k+1)\delta n^{-1}\right),$$
and let $\chi_{i,j,k}$ denote the characteristic function of $M_{i,j,k}$. Then for every $i$ and $j$, we can write $M$ as a disjoint union of $\{M_{i,j,k}\}_{k\in\Z}$. Define
		$$g_{i,j}=\sum_{k\in\Z}k\cdot\frac{\delta}{n}\chi_{i,j,k}.$$
		The $n^2$ functions $g_{i,j}$ form the components of a uniformly bounded Borel function $g\colon M\to M_n(\R)$ approximating $f$. We claim that:
	\begin{enumerate}[(i)]
	\item $\norm{I_m\otimes(f-g)}\leq\delta;$
	\item $\norm{[T\otimes I_n,I_m\otimes g]}\leq 2\delta\norm{T}.$
	\end{enumerate}
	The result then follows from this together with the observation that	
	$$\norm{[T\otimes I_n,I_m\otimes f]}\leq\norm{[T\otimes I_n,I_m\otimes(f-g)]}+\norm{[T\otimes I_n,I_m\otimes g]}.$$
	
	For (i), note that by construction, $\norm{f_{i,j}-g_{i,j}}_\infty\leq\frac{\delta}{n}$ for each $i$ and $j$. A standard inequality for matrix norms then implies that
		\begin{align*}
		\norm{I_m\otimes(f-g)}&\leq n\cdot\max_{i,j}\norm{f_{i,j}-g_{i,j}}_\infty\leq\delta.
		\end{align*}
%		where on the left-hand side we have the operator norm on $\mathcal{B}(L^2(S\otimes\C^n))$. 
	
	To prove (ii), first observe that if $k$ and $l$ are integers satisfying $|k-l|>1$, then for all $x\in M_k$ and $y\in M_l$ we have
	$$|f_{i,j}(x)-f_{i,j}(y)|>\frac{\delta}{n},$$
	for every $i,j\in\{1,\ldots,n\}$. Hence
	\begin{align*}
		\norm{f(x)-f(y)}&\geq\max_{i}\sum_{j=1}^n|f_{i,j}(x)-f_{i,j}(y)|>\delta,
	\end{align*}
	whence the definition of $\delta$ implies that $d_M(x,y)>\textnormal{prop}(T)$. It follows that
	\begin{align*}
	[T,I_m\otimes g_{i,j}]=&\sum_{k\in\Z}\frac{\delta}{n}(T\chi_{i,j,k}-\chi_{i,j,k}T)\cdot I_m\otimes k\\
	=&\begin{multlined}[t]
	\sum_{k\in\Z}\frac{\delta}{n}\Big((\chi_{i,j,k-1}+\chi_{i,j,k}+\chi_{i,j,k+1})T\chi_{i,j,k}\\-\chi_{i,j,k}T(\chi_{i,j,k-1}+\chi_{i,j,k}+\chi_{i,j,k+1})\Big)\cdot I_m\otimes k.
    \end{multlined}
    \end{align*}
    Rearranging and simplifying gives
    \begin{equation}
    \label{eq commutator}
   	[T,I_m\otimes g_{i,j}]=-\frac{\delta}{n}\Big(\sum_{k\in\Z}\chi_{i,j,k}T\chi_{i,j,k+1}+\sum_{k\in\Z}\chi_{i,j,k+1}T\chi_{i,j,k}\Big)\cdot I_m.
    \end{equation}
    The summands of each sum are pairwise orthogonal with norms bounded by $\norm{T}$, so the norm of the expression \eqref{eq commutator} is at most $2\frac{\delta}{n}\norm{T}$. For each $i,j\in\{1,\ldots,n\}$, let $E^{i,j}$ be the $(n\times n)$-matrix with $(i,j)$-th entry equal to $1$ and all other entries equal to $0$. Then we can write
    $$[T\otimes I_n,I_m\otimes g]=\sum_{i,j}\,[T,I_m\otimes g_{i,j}]\otimes E^{i,j}.$$
    It follows that
    \begin{align*}
		\norm{[T\otimes I_n,I_m\otimes g]}&\leq n\cdot\max_{i,j}\norm{[T,I_m\otimes g_{i,j}]}\leq 2\delta\norm{T}.
	\end{align*}
	This establishes (ii) and hence the result.
%    Each sum is composed of pairwise orthogonal summands whose norms are bounded by $\norm{T}$, hence the norm of this expression is bounded by $2\frac{\delta}{n}\norm{T}$. Now observe that $[T\otimes I_n,I_m\otimes g]$ is composed of $m\times m$ blocks of $(n\times n)$-matrices, with each block being of the form $[T\otimes I_n,g]$. The $(s,t)$-th block has norm
%    $$\norm{[T_{s,t}\otimes I_n,g]}\leq n\cdot\max_{i,j}\norm{[T_{s,t},g_{i,j}]}\leq 2\delta\norm{T_{s,t}}.$$
%    Let $\Lambda$ be the $(m\times m)$-matrix with entries $\Lambda_{s,t}=\norm{[T_{s,t}\otimes I_n,g]}$. Then
% 	\begin{align*}
%		\norm{[T\otimes I_n,I_m\otimes g]}&\leq\norm{\Lambda}\\
%		&\leq n\cdot\max_{i,j}\norm{[T,g_{i,j}]}\leq 2\delta\norm{T}.
%	\end{align*}
%	This establishes (ii) and hence the result.
\end{proofof}
%The following is a straightforward consequence of Proposition \ref{prop commutator}.

We introduce the following notation: for a given subset $W\subseteq\mathcal{B}(L^2(S))$, let us write $\langle W\rangle$ for the $C^*$-subalgebra generated by $W$. 
\begin{corollary}
\label{cor basic}
Let $P\in\textnormal{Idem}^{\varepsilon,r,N}_m(C^*(M)^+)$ and $p\in P^L(M_n(C_0(M)^+))$. Then we have
$$(P\otimes I_n)\cdot (I_m\otimes p)\in\textnormal{Idem}^{8rN^2L+\varepsilon,r,N}(M_{nm}\langle C^*(M)\cup C_0(M)\rangle^+).$$
\end{corollary}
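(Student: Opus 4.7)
The plan is to verify directly the three defining conditions of an $(8rN^2L+\varepsilon, r, N)$-quasiidempotent (Definition \ref{def quantitative K}) for the element $Q := (P \otimes I_n)(I_m \otimes p)$, regarded as sitting inside $M_{nm}\langle C^*(M) \cup C_0(M)\rangle^+$.

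The propagation bound is the easy part: $P \otimes I_n$ inherits propagation at most $r$ from $P$, while $I_m \otimes p$ has propagation $0$, since $p$ acts on $L^2(S) \otimes \C^n$ by pointwise matrix multiplication. Hence $\textnormal{prop}(Q) \leq r$.

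The heart of the argument is the estimate $\norm{Q^2 - Q} < 8rN^2L + \varepsilon$. Writing $A = P \otimes I_n$ and $B = I_m \otimes p$, I would use the elementary identity $BA = AB + [B,A]$ together with $B^2 = B$ (which holds because $p$ is a genuine projection) to obtain
\[
Q^2 - Q \;=\; (A^2 - A)B \;+\; A[B,A]B \;=\; \bigl((P^2 - P) \otimes I_n\bigr)(I_m \otimes p) \;+\; A[B,A]B.
\]
The first summand contributes at most $\norm{P^2 - P}\cdot \norm{p} < \varepsilon$. The commutator $[B,A]$ is precisely the object controlled by Proposition \ref{prop commutator}, applied with $T = P$ and $f = p$: since $\textnormal{prop}(P) \leq r$ and $\norm{P} \leq N$, one obtains $\norm{[B,A]} \leq 8LrN$. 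Combining with $\norm{A} \leq N$ and $\norm{B} \leq 1$ then bounds the second summand by $8rN^2L$, giving the claimed $\varepsilon'$.

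The remaining norm bounds are routine: $\norm{Q} \leq \norm{A}\cdot\norm{B} \leq N$, while $\norm{1 - Q}$ is controlled via the decomposition $1 - Q = (1 - B) + (1 - A)B$ together with $\norm{1 - p} \leq 1$ and $\norm{1 - P} \leq N$. The only substantive input is the commutator estimate from Proposition \ref{prop commutator}; everything else is straightforward bookkeeping with Kronecker products and the identity $p^2 = p$, so I do not anticipate any real obstacle.
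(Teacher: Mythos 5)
Your proof reproduces the paper's argument essentially verbatim: write $Q^2 - Q$ as $((P^2-P)\otimes I_n)(I_m\otimes p)$ plus a commutator term, then bound the commutator via Proposition \ref{prop commutator} using $\textnormal{prop}(P)\leq r$ and $\norm{P}\leq N$; the propagation and norm checks are handled in the same way. One small technical point, which applies equally to the paper's own one-line proof: your decomposition $1 - Q = (1-B) + (1-A)B$ only gives $\norm{1-Q}\leq N+1$ rather than $\leq N$, so strictly the last parameter in the corollary should be $N+1$ (or $2N$) rather than $N$ --- immaterial for the rest of the paper, where the $N$-dependent constants are far from sharp, but worth noting when you claim the bounds are ``routine.''
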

\begin{proof}
Write $((P\otimes I_n)\cdot (I_m\otimes p))^2-(P\otimes I_n)\cdot (I_m\otimes p)$ as
\begin{align*}
	((P^2-P)\otimes I_n)\cdot (I_m\otimes p)+(P\otimes I_n)[P\otimes I_n,I_m\otimes p](I_m\otimes p)
\end{align*}
and take norms, applying Proposition \ref{prop commutator} to the term $[P\otimes I_n,I_m\otimes p]$.
\end{proof}

%In order to extend the basic pairing \eqref{eq basic pairing} to formal differences of matrices, we will use a construction of $K$-theory elements of ideals from \cite{KasparovYu}; we refer to that paper for more details (see also \cite[section 3]{WXYdecay}). For any unital $C^*$-algebra $A$, let
%$$D_A(I)=\{(a,a')\in A\oplus A\colon a-a'\in I\}.$$
%There is a split short exact sequence $0\to I\to D_A(I)\to A\to 0$ that gives rise to an isomorphism $K_*(D_A(I))\cong K_*(I)\oplus K_*(A)$. It follows that any class $x\in K_0(I)$ is the image of a class $[\alpha]-[\beta]\in K_*(D_A(I))$ under the projection onto the first factor. The following procedure allows one to construct an explicit idempotent in $M_4(I^+)$ representing the class $x$.

In order to extend the basic pairing \eqref{eq basic pairing} to formal differences of matrices, we will use a version of the \emph{difference construction} from \cite{KasparovYu}; we refer to that paper for more details (see also \cite[section 3]{WXYdecay}).

Let $A$ be unital a geometric $C^*$-algebra and $I$ a closed two-sided ideal. We will assume that $I$ is a ``geometric ideal" in the sense that the geometric structure of $A$ induces a geometric structure on $I$. Suppose $\alpha,\beta\in\textnormal{Idem}^{\varepsilon,r,N}(A)$ with $\alpha-\beta\in I$. Write
$$Z_\beta=\begin{pmatrix}\beta&0&1-\beta&0\\1-\beta&0&0&\beta\\0&0&\beta&1-\beta\\0&1&0&0\end{pmatrix},\quad Y_{\alpha,\beta}=\begin{pmatrix}\alpha&0&0&0\\0&1-\beta&0&0\\0&0&0&0\\0&0&0&0\end{pmatrix}.$$
Define
\begin{align}
\label{eq difference matrix}
	d(\alpha,\beta)&=Z_\beta^T Y_{\alpha,\beta}Z_\beta\nonumber\\
	&=\begin{pmatrix}1+\beta(\alpha-\beta)\beta&0&\beta(\alpha-\beta)&0\\0&0&0&0\\(\alpha-\beta)\alpha\beta&0&(1-\beta)(\alpha-\beta)(1-\beta)&0\\0&0&0&0\end{pmatrix}\in M_4(I^+).
\end{align}
\begin{proposition}
\label{prop difference}
Let $A$ and $I$ be as above. Let $\alpha$ and $\beta$ be $(\varepsilon,r,N)$-quasiidempotents such that $\alpha-\beta\in I$. Then $d(\alpha,\beta)$ is a $(2^8N^4\varepsilon,3r,16N^3)$-quasiidempotent in $M_4(I^+)$.
\end{proposition}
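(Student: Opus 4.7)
The plan is to verify directly the three conditions in Definition \ref{def quantitative K} for the explicit $4\times 4$ matrix $d(\alpha,\beta)$ displayed in \eqref{eq difference matrix}, with the output parameters $(\varepsilon', r', N') = (2^8 N^4 \varepsilon,\, 3r,\, 16N^3)$. I will handle the easy conditions (membership, propagation, norm) first, and reserve the main effort for the quasiidempotent estimate.

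For membership in $M_4(I^+)$, I observe that every non-zero off-diagonal entry of $d(\alpha,\beta)$ contains $\alpha-\beta \in I$ as a factor, and the $(1,1)$-entry has the form $1 + \beta(\alpha-\beta)\beta$ with the second summand in $I$; so $d(\alpha,\beta) - e_{11} \in M_4(I)$. For the propagation bound, each non-zero entry is a product of at most three factors drawn from $\{\beta,\, 1-\beta,\, \alpha,\, \alpha-\beta\}$, each of propagation at most $r$ (since the filtration $\{A_r\}$ is a linear subspace containing $1$), giving total propagation at most $3r$. For the norm bounds, every entry has norm at most $\|\alpha-\beta\| \cdot N^2 \leq 2N^3$; using $\|A\| \leq 4 \max_{i,j}\|A_{ij}\|$ for a $4\times 4$ matrix $A$, I obtain $\|d(\alpha,\beta)\|,\, \|1 - d(\alpha,\beta)\| \leq 4(1 + 2N^3) \leq 16N^3$ for $N \geq 1$.

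The conceptual heart of the argument is that when $\alpha, \beta$ are \emph{true} idempotents, $d(\alpha,\beta)$ is itself a true idempotent, representing the class $[\alpha] - [\beta] \in K_0(I^+)$ (this is the standard behaviour of the difference construction; cf.\ \cite{KasparovYu}). Equivalently, using the factorization $d(\alpha,\beta) = Z_\beta^T Y_{\alpha,\beta} Z_\beta$, I would write
$$d(\alpha,\beta)^2 - d(\alpha,\beta) = Z_\beta^T Y_{\alpha,\beta}(Z_\beta Z_\beta^T - 1)Y_{\alpha,\beta} Z_\beta + Z_\beta^T(Y_{\alpha,\beta}^2 - Y_{\alpha,\beta})Z_\beta.$$
The second term is controlled by $\|Y_{\alpha,\beta}^2 - Y_{\alpha,\beta}\| = \max(\|\alpha^2-\alpha\|,\|\beta^2-\beta\|) < \varepsilon$. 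For the first term, a direct entrywise computation of $Z_\beta Z_\beta^T$ shows that its entries differ from those of the identity only by scalar multiples of $\beta^2 - \beta$ and $\beta(1-\beta)$, each of norm $< \varepsilon$, giving $\|Z_\beta Z_\beta^T - 1\| \leq C\varepsilon$ for an absolute constant $C$. Combining with $\|Z_\beta\| \leq 4N$ and $\|Y_{\alpha,\beta}\| \leq N$, and accounting for the $4\times 4$ matrix-norm factor, both terms are bounded by a constant multiple of $N^4\varepsilon$, and the coefficient $2^8$ is easily sufficient to absorb all crude estimates.

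The main obstacle is purely the bookkeeping — either the direct entry-by-entry expansion of $d^2 - d$ using the relations $\alpha^2 = \alpha + O(\varepsilon)$ and $\beta^2 = \beta + O(\varepsilon)$ (where every monomial in the resulting polynomial identity must be grouped to expose a factor of $\alpha^2 - \alpha$ or $\beta^2 - \beta$), or the verification of the factorization identity together with the bound on $\|Z_\beta Z_\beta^T - 1\|$. I do not expect any conceptual difficulty beyond this; the stated constant $2^8 N^4$ is generous enough to accommodate the matrix-norm factors of $4$ and the quasiidempotent-norm factors of $N$ that accumulate in either route.
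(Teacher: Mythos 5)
Your proposal is correct and follows essentially the same route as the paper: the factorization $d(\alpha,\beta)^2 - d(\alpha,\beta) = Z_\beta^T Y_{\alpha,\beta}(Z_\beta Z_\beta^T - I_4)Y_{\alpha,\beta}Z_\beta + Z_\beta^T(Y_{\alpha,\beta}^2 - Y_{\alpha,\beta})Z_\beta$, together with the bounds $\norm{Z_\beta}\leq 4N$, $\norm{Y_{\alpha,\beta}}\leq N$, $\norm{Y_{\alpha,\beta}^2 - Y_{\alpha,\beta}}<\varepsilon$, and $\norm{Z_\beta Z_\beta^T - I_4}\leq 8\varepsilon$ (the paper carries out the entrywise computation you deferred, finding that the off-identity entries are all scalar multiples of $\beta^2-\beta$). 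Your norm bound $16N^3$ for $d(\alpha,\beta)$ via the entrywise estimate is a minor variation on the paper's direct product bound $\norm{Z_\beta^T}\norm{Y_{\alpha,\beta}}\norm{Z_\beta}\leq 16N^3$, but the substance is identical.
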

\begin{proof}
First observe that 
$$Z_\beta^T Z_\beta=
\begin{pmatrix}
1+2(\beta^2-\beta)&0&\beta-\beta^2&0\\
\beta-\beta^2&1+2(\beta^2-\beta)&\beta-\beta^2&0\\
\beta-\beta^2&\beta-\beta^2&1+2(\beta^2-\beta)&0\\
0&0&0&1	
\end{pmatrix}.$$
Recall that the operator norm of an $n\times n$ matrix $M$ with entries $m_{i,j}$ is bounded above by $n\cdot\max\{|m_{i,j}|\}$. Combined with $\norm{\beta-\beta^2}<\varepsilon$ and $\norm{1+2(\beta^2-\beta)-1}<2\varepsilon$, this implies that
 $\norm{Z_\beta^T Z_\beta-I_4}<8\varepsilon$, $\norm{Z_\beta}\leq 4N$, $\norm{Y_{\alpha,\beta}}\leq N$, and $\norm{Y_{\alpha,\beta}^2-Y_{\alpha,\beta}}\leq\varepsilon$. 
It follows that 
\begin{align*}
\norm{d(\alpha,\beta)^2-d(\alpha,\beta)}
&=\norm{Z_\beta^T Y_{\alpha,\beta}(Z_\beta Z_\beta^T)Y_{\alpha,\beta}Z_\beta-Z_\beta^T Y_{\alpha,\beta}Z_\beta\\&\qquad\qquad\qquad+Z_\beta^T Y_{\alpha,\beta}Y_{\alpha,\beta}Z_\beta-Z_\beta^T Y_{\alpha,\beta}Y_{\alpha,\beta}Z_\beta}\\
%&=\norm{Z_\beta^T Y_{\alpha,\beta}(Z_\beta Z_\beta^T)Y_{\alpha,\beta}Z_\beta-Z_\beta^T Y_{\alpha,\beta}Y_{\alpha,\beta}Z_\beta}+\norm{Z_\beta^T}\norm{Y_{\alpha,\beta}^2-Y_{\alpha,\beta}}\norm{Z_\beta}
&=\norm{Z_\beta^T Y_{\alpha,\beta}(Z_\beta Z_\beta^T-I_4)Y_{\alpha,\beta}Z_\beta}\\
&\qquad\qquad\qquad\qquad+\norm{Z_\beta^T}\norm{Y_{\alpha,\beta}^2-Y_{\alpha,\beta}}\norm{Z_\beta}\\
&\leq 4N\cdot N\cdot 8\varepsilon\cdot N\cdot 4N+4N\cdot\varepsilon\cdot 4N\\
&\leq2^8N^4\varepsilon.
\end{align*}
Observing that $\norm{I_4-Y_{\alpha,\beta}}\leq N$, we see that $\norm{d(\alpha,\beta)}$ and $\norm{1-d(\alpha,\beta)}$ are each bounded by $4N\cdot N\cdot 4N=16N^3$. Finally, it is clear from \eqref{eq difference matrix} that $\textnormal{prop}(d(\alpha,\beta))\leq 3r$.
\end{proof}

Having made these preparations, we are now ready to formulate a general pairing between the quantitative $K$-theory of $C^*(M)$ and Lipschitz $K$-theory of $C_0(M)$. The idea of the pairing is to apply the difference construction twice, first to the ideal
$$\langle\cK\cup C_0(M)\rangle\triangleleft\langle C_0(M)\cup C^*(M)\rangle^+,$$
and then to the ideal
$$\cK\triangleleft\langle\cK\cup C_0(M)\rangle^+.$$

Suppose we have classes
\begin{align*}
[P_1]-[P_2]&\in K_0^{\varepsilon,r,N}(C^*(M)),\\
[p_1]-[p_2]&\in K_0^L(C_0(M))
\end{align*}
given by formal differences of elements $P_1,P_2\in\textnormal{Idem}_m^{\varepsilon,r,N}(C^*(M)^+)$ and $p_1,p_2\in P_n^L(C_0(M)^+)$ for some $m,n\in\N$. Suppose in addition that $p_1-p_2\in M_n(C_0(M))$; we may always arrange for this to be the case. For each $i,j=1,2$, Corollary \ref{cor basic} implies that 
$$(P_i\otimes I_n)\cdot(I_m\otimes p_j)$$ 
is an element of $\textnormal{Idem}_{mn}^{8rN^2L+\varepsilon,r,N}(\langle C_0(M)\cup C^*(M)\rangle^+)$. Let
\begin{equation*}
\label{eq first difference}
P_{i,p_1,p_2}=d\big((P_i\otimes I_n)\cdot(I_m\otimes p_1)\,,\,(P_i\otimes I_n)\cdot(I_m\otimes p_2)\big),
\end{equation*}
for $i=1,2$. It follows from Proposition \ref{prop difference} that $P_{i,p_1,p_2}$ is an element of $\textnormal{Idem}_{4mn}^{2^{11}rN^6L+2^8N^4\varepsilon,3r,16N^3}(\langle\cK\cup C_0(M)\rangle^+)$. Define
\begin{equation}
\label{eq pairing}
\langle [P_1]-[P_2],[p_1]-[p_2]\rangle=\big[d(P_{1,p_1,p_2},P_{2,p_1,p_2})\big]-\left[\begin{psmallmatrix}1&0&0&0\\0&0&0&0\\0&0&0&0\\0&0&0&0\end{psmallmatrix}\otimes I_{4mn}\right].
\end{equation}
It follows from Proposition \ref{prop difference} that $d(P_{1,p_1,p_2},P_{2,p_1,p_2})$ is an element of $\textnormal{Idem}_{16mn}^{2^{35}rN^{18}L+2^{32}N^{16}\varepsilon,9r,2^{16}N^9}(\cK^+)$.

\begin{theorem}
\label{thm pairing defined}
The formula \eqref{eq pairing} defines a map
%$\Z$-bilinear map
\begin{equation}
\label{eq precise pairing}	
\langle\cdot,\cdot\rangle\colon K_0^{\varepsilon,r,N}(C^*(M))\times K_0^L(C_0(M))\to K_0^{\varepsilon',r',N'}(\cK),
\end{equation}
where $\varepsilon' = 2^{70}rN^{18}L + 2^{64}N^{16}\varepsilon$, $r'=9r$,and $N'=2^{32}N^9.$
\end{theorem}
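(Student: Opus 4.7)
The proof has three components: verifying that the formula \eqref{eq pairing} produces an element of $K_0^{\varepsilon', r', N'}(\cK)$; showing this element is independent of the choice of representatives $P_1, P_2, p_1, p_2$; and showing additivity so that the assignment descends to the Grothendieck completions. I would first note that one may, after enlarging matrix sizes if necessary, also assume $P_1 - P_2 \in M_m(C^*(M))$, paralleling the assumption already made for $p_1, p_2$.

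The first component amounts to tracking parameters and ideals. Corollary \ref{cor basic} places each $(P_i\otimes I_n)(I_m\otimes p_j)$ in $\textnormal{Idem}^{8rN^2L+\varepsilon,r,N}_{mn}(\langle C^*(M)\cup C_0(M)\rangle^+)$. The first application of Proposition \ref{prop difference} yields $P_{i,p_1,p_2}$; since $(P_i \otimes I_n)(I_m\otimes(p_1-p_2))$ lies in $C^*(M)\cdot C_0(M) + C_0(M)\subseteq \langle \cK\cup C_0(M)\rangle$ by local compactness of elements of $C^*(M)$, the output sits in $M_{4mn}(\langle\cK\cup C_0(M)\rangle^+)$ with the stated parameters. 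For the second application, I would check that $P_{1,p_1,p_2} - P_{2,p_1,p_2} \in M_{4mn}(\cK)$ by directly expanding the block matrices in \eqref{eq difference matrix}: every nonzero entry of the difference contains either a factor of $(P_1-P_2)\otimes I_n$ paired with some $p_j$, or a factor of $I_m\otimes(p_1-p_2)$ paired with some $P_i$, and in both cases $C^*(M)\cdot C_0(M)\subseteq \cK$ supplies the compactness. A second application of Proposition \ref{prop difference} then delivers the parameters $\varepsilon', r', N'$. Finally, the scalar augmentations of $P_{1,p_1,p_2}$ and $P_{2,p_1,p_2}$ coincide, so the formal difference in \eqref{eq pairing} lies in $\ker(\pi_*) = K_0^{\varepsilon',r',N'}(\cK)$.

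For independence of representatives, I would promote any homotopy of an input to a homotopy of the output and then check the resulting parameters. Specifically, if $P_i(t)$ is a $(4\varepsilon,r,4N)$-quasiidempotent homotopy and $p_j(s)$ is a $2L$-Lipschitz homotopy, substituting them into the formula yields a continuous path in the appropriate matrix algebra, and one verifies that this path lies entirely in $\textnormal{Idem}^{4\varepsilon',r',4N'}$, which is precisely the slack permitted by the equivalence relation defining $K_0^{\varepsilon',r',N'}(\cK)$. Stability under the inclusions $\textnormal{Idem}_n \hookrightarrow \textnormal{Idem}_{n+1}$ and additivity under block sums both follow directly from the block structure of $d(\alpha,\beta)$ in \eqref{eq difference matrix} combined with the definition of addition in quantitative and Lipschitz $K$-theory. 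Together with homotopy invariance, this promotes the pairing to a well-defined bilinear map on the Grothendieck completions.

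The main obstacle I anticipate is the parameter bookkeeping in the homotopy-invariance step. Feeding $(4\varepsilon,4N)$-quasiidempotent homotopies and $2L$-Lipschitz homotopies through Corollary \ref{cor basic} produces initial parameters degraded by factors of roughly $4$ and $2$, and these degradations compound through the two applications of Proposition \ref{prop difference}, each of which scales the quasiidempotent defect by a factor on the order of $N^4$ and the norm bound by a factor on the order of $N^3$. Matching the resulting homotopy against the window $\textnormal{Idem}^{4\varepsilon',r',4N'}$ is what pins down the explicit exponents $\varepsilon' = 2^{70}rN^{18}L + 2^{64}N^{16}\varepsilon$ and $N' = 2^{32}N^9$ appearing in the theorem: they are chosen so that the single factor-of-four slack available at the final stage absorbs the cumulative growth from the homotopies of the inputs.
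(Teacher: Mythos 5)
Your proposal is correct and follows essentially the same route as the paper: parameter tracking through Corollary \ref{cor basic} and two applications of Proposition \ref{prop difference}, then homotopy invariance by feeding $(4\varepsilon,r,4N)$-quasiidempotent and $2L$-Lipschitz homotopies through the same machinery to land in $\textnormal{Idem}^{4\varepsilon',r',4N'}$. You are in fact more explicit than the paper on two helpful points — normalizing so that $P_1-P_2\in M_m(C^*(M))$, and verifying that $P_{1,p_1,p_2}-P_{2,p_1,p_2}\in M_{4mn}(\cK)$ by telescoping — both of which are genuinely needed for the second difference construction to land in $\cK^+$.
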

\begin{proof}
To see that the first map is well-defined, suppose that for $i=1,2$, we have $(\varepsilon,r,N)$-quasiidempotents $P_i$ and $P_i'$ and $L$-Lipschitz projections $p_i$ and $p_i'$ such that there exist paths $P_{i}(t)$ and $p_{i}(t)$ of $(4\varepsilon,r,4N)$-quasiidempotents and $2L$-Lipschitz projections respectively with $P_{i}(0)=P_i$, $P_{i}(1)=P_i'$, $p_{i}(0)=p_1$, and $p_{i}(1)=p_i'$. Let
$$P'_{i,p'_1,p'_2}=d\big((P'_i\otimes I_n)\cdot(I_m\otimes p'_1)\,,\,(P'_i\otimes I_n)\cdot(I_m\otimes p'_2)\big).$$
The homotopies $P_i(t)$ and $p_i(t)$ induce a homotopy connecting $P_{i,p_1,p_2}$ and $P'_{i,p_1',p_2'}$, which in turn induces a homotopy connecting $d(P_{1,p_1,p_2},P_{2,p_1,p_2})$ and $d(P'_{1,p'_1,p'_2},P'_{2,p'_1,p'_2})$. Applying a similar analysis as above, we find that the latter is a homotopy through $(2^{72}rN^{18}L+2^{66}N^{16}\varepsilon,9r,2^{34}N^9)$-quasiidempotents in $M_{16mn}(\cK^+)$. By Definition \ref{def quantitative K}, the elements defined by pairing $P_1-P_2$ with $p_1-p_2$, and by pairing $P_1'-P_2'$ with $p_1'-p_2'$, are equal in $K_0^{\varepsilon',r',N'}(\cK)$.
\end{proof}
\begin{remark}
\label{rem zero}
It follows from this, together with \eqref{eq pairing}, that the pairing of the zero class in $K_0^{\varepsilon,r,N}(C^*(M))$ with any class in $K_0^L(C_0(M))$ is the zero class in $K_0^{\varepsilon',r',N'}(\cK)$.
\end{remark}
\begin{definition}
\label{def pairable}
We say a quadruple $(\varepsilon,r,N,L)$ of positive real numbers is \emph{pairable} if $2^6rN^2L+\varepsilon<2^{-68}N^{-16}$.
\end{definition}
\begin{corollary}
\label{cor useful pairing}
If $(\varepsilon,r,N,L)$ is pairable, then we also have a well-defined pairing
$$\langle\cdot,\cdot\rangle_{\varepsilon,r,N,L}\colon K_0^{\varepsilon,r,N}(C^*(M))\times K_0^L(C_0(M))\to K_0(\cK)$$
given by composing the pairing \eqref{eq precise pairing} with the map $\kappa\colon K_0^{\varepsilon',r',N'}(\cK)\to K_0(\cK)$ in \eqref{eq kappa}, i.e. $\langle\alpha,\beta\rangle_{\varepsilon,r,N,L}=\kappa(\langle\alpha,\beta\rangle)$.	
\end{corollary}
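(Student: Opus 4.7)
My plan is to deduce this corollary directly from Theorem \ref{thm pairing defined} together with the definition of the map $\kappa$ from \eqref{eq kappa}, by verifying that the pairable condition from Definition \ref{def pairable} is exactly what is needed to land in the domain of $\kappa$.

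The first step is to recall when the homomorphism $\kappa\colon K_0^{\varepsilon',r',N'}(\cK) \to K_0(\cK)$ is defined. As explained in the paragraph preceding \eqref{eq kappa}, what is needed is that the spectrum of any $(\varepsilon',r',N')$-quasiidempotent split into two disjoint pieces, one around $0$ and one around $1$, so that the holomorphic functional calculus applied to a function like $f_0$ returns a genuine idempotent. Concretely, the condition $\varepsilon'<1/4$ suffices, since then $\overline{B}_{\sqrt{\varepsilon'}}(0)$ and $\overline{B}_{\sqrt{\varepsilon'}}(1)$ are disjoint and contain the spectrum.

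The second step is the numerical verification. From Theorem \ref{thm pairing defined},
$$\varepsilon' \;=\; 2^{70}rN^{18}L \;+\; 2^{64}N^{16}\varepsilon \;=\; 2^{64}N^{16}\bigl(2^6 rN^2 L + \varepsilon\bigr).$$
Pairability of $(\varepsilon,r,N,L)$ gives $2^6 rN^2 L + \varepsilon < 2^{-68}N^{-16}$, so $\varepsilon' < 2^{-4} = 1/16 < 1/4$. Hence $K_0^{\varepsilon',r',N'}(\cK)$ lies comfortably in the regime where $\kappa$ is defined.

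Once these two observations are in place, there is nothing more to do: the map $\langle\cdot,\cdot\rangle_{\varepsilon,r,N,L}$ is defined as $\kappa \circ \langle\cdot,\cdot\rangle$, and composition of a well-defined pairing (Theorem \ref{thm pairing defined}) with a well-defined homomorphism ($\kappa$) is again well-defined. No genuine obstacle appears here; the purpose of Definition \ref{def pairable} is precisely to calibrate the constants so that the output of the pairing in Theorem \ref{thm pairing defined} lies in the acceptable range for the comparison map to ordinary $K$-theory of $\cK$.
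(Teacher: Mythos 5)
Your proof is correct and follows the same route as the paper: reduce to verifying that pairability forces $\varepsilon'<1/4$, so that $\kappa$ is defined, and then the pairing is simply $\kappa$ composed with the pairing of Theorem \ref{thm pairing defined}. Your explicit factorization $\varepsilon' = 2^{64}N^{16}\bigl(2^6 rN^2 L + \varepsilon\bigr)$ and the resulting bound $\varepsilon'<1/16$ is in fact slightly cleaner than the paper's one-line justification (which asserts an ``if and only if'' with $1/4$ that is really just an ``if,'' with a little slack built into the $2^{-68}$).
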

\begin{proof}
The quadruple $(\varepsilon,r,N,L)$ is pairable if and only if $\varepsilon'<\frac{1}{4}$, in which case the map $\kappa$ is well-defined.	
\end{proof}

The pairing $\langle\cdot,\cdot\rangle_{\varepsilon,r,N,L}$ can be thought of as a quantitative version of the natural pairing
\begin{equation}
\label{eq usual K-homology pairing}
\langle\cdot,\cdot\rangle_0\colon K_0(M)\times K^0(M)\to\Z
\end{equation}
between $K$-homology and $K$-theory. One construction of the latter pairing is as follows. Recall that the \emph{localization algebra} of $M$ is the $C^*$-algebra formed by completing the $*$-algebra of functions $f\colon [0,\infty)\rightarrow C^*(M)$ that:
\begin{enumerate}[(i)]
\item are uniformly bounded and uniformly continuous;
\item satisfy $\textnormal{prop}(f(t))\rightarrow 0\quad\textnormal{as }t\rightarrow\infty,$
\end{enumerate}
	with respect to the norm $\|f\|\coloneqq\sup_t\|{f(t)}\|_{\mathcal{B}(L^2(S))}$.
	The $K$-homology of $M$ is isomorphic to $K_0(C^*_L(M))$ \cite{QiaoRoe,Yu3}.
	%under which the $K$-homology class of $D$ is mapped to the local higher index of $D$ \cite{QiaoRoe,Yu3}. 
	In this picture, the pairing \eqref{eq usual K-homology pairing} is given by the composition:
\begin{equation}
\label{eq pairing K-homology}	
\langle\cdot,\cdot\rangle_0\colon K_0(C^*_L(M))\times K_0(C_0(M))\to K_0(\mathcal{K}_\infty)\xrightarrow{\cong} K_0(\mathcal{K}),
\end{equation}
where $\cK_\infty$ is the quotient of the $C^*$-algebra of uniformly continuous bounded maps $[0,\infty)\to\cK$ by the ideal generated by those maps that vanish at infinity, while the marked isomorphism is induced by evaluation at $0$.

The pairings $\langle\cdot,\cdot\rangle_{\varepsilon,r,N,L}$ and $\langle\cdot,\cdot\rangle_0$ are compatible in the following sense:
\begin{proposition}
\label{prop pairing compatibility}
Suppose the quadruple $(\varepsilon,r,N,L)$ is pairable. Let $[f_1]-[f_2]\in K_0(C^*_L(M))$ for idempotents $f_1,f_2\in M_m(C^*_L(M)^+)$, and let $s$ be a non-negative real number such that
$$f_i(s)\in M_m(\mathbb{C}_r[M])\quad\textnormal{and}\quad\max\{\norm{f_i(s)},\norm{1-f_i(s)}\}\leq N$$
for $i=1,2$. Let $p_1$ and $p_2$ be idempotents in $P^L_n(M_n(C_0(M)^+))$ such that $p_1-p_2\in M_n(C_0(M))$. Then we have
$$\big\langle[f_1]-[f_2],[p_1]-[p_2]\big\rangle_0=\big\langle[f_1(s)]-[f_2(s)],[p_1]-,[p_2]\big\rangle_{\varepsilon,r,N,L}\in K_0(\cK).$$
In particular,
$$\big\langle[D],[p_1]-[p_2]\big\rangle_0=\big\langle\Ind^{\varepsilon,r,N}(D),[p_1]-,[p_2]\big\rangle_{\varepsilon,r,N,L},$$
where $D$ is the Dirac operator on $M$.
\end{proposition}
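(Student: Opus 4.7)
The plan is to exhibit both sides of the asserted identity as arising from one and the same family of quasi-idempotents in $M_{16mn}(\cK^+)$ parametrised by $t\in[0,\infty)$, evaluated respectively at $t=0$ (giving the $K$-homology pairing) and $t=s$ (giving the quantitative pairing).

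First, I would carry out the double difference construction of \eqref{eq pairing} pointwise in $t$, using the evaluations $f_i(t)\in M_m(C^*(M)^+)$. Because $f_i$ is an honest idempotent in $M_m(C^*_L(M)^+)$, each $f_i(t)$ is an honest idempotent; the only obstruction to $(f_i(t)\otimes I_n)(I_m\otimes p_j)$ being idempotent is the commutator $[f_i(t)\otimes I_n,\,I_m\otimes p_j]$, which is controlled by Proposition \ref{prop commutator}. The resulting family $t\mapsto d(P_{1,p_1,p_2}(t),P_{2,p_1,p_2}(t))$ is a uniformly continuous, uniformly bounded map $[0,\infty)\to M_{16mn}(\cK^+)$. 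Up to subtracting the standard rank-one idempotent, this represents the class in $K_0(\cK_\infty)$ underlying $\langle[f_1]-[f_2],[p_1]-[p_2]\rangle_0$ via \eqref{eq pairing K-homology}.

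Second, at $t=s$ the pointwise construction specialises exactly to the quantitative difference construction applied to $f_i(s)$: the hypothesis on $f_i(s)$ ensures it is an $(\varepsilon,r,N)$-quasi-idempotent (indeed an honest idempotent of propagation at most $r$ and norm at most $N$), and Corollary \ref{cor basic} together with Proposition \ref{prop difference}, applied in exactly the same way as in the derivation of \eqref{eq pairing}, identifies the evaluation at $t=s$ with the representative of $\langle[f_1(s)]-[f_2(s)],[p_1]-[p_2]\rangle_{\varepsilon,r,N,L}$ prior to applying $\kappa$. Since $[0,\infty)$ is contractible, evaluations at $0$ and at $s$ induce the same isomorphism $K_0(\cK_\infty)\xrightarrow{\cong}K_0(\cK)$, so after composing with $\kappa$ the two values coincide in $K_0(\cK)$. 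This gives the first equality.

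For the Dirac case, I would use the standard localization-algebra representative of $[D]\in K_0(C^*_L(M))$: choose a family $\{\chi_t\}_{t\geq 0}$ of normalising functions with $\supp\widehat{\chi_t}\subseteq[-\tfrac{r(t)}{5},\tfrac{r(t)}{5}]$, $r(t)\to 0$ as $t\to\infty$, and $\chi_s=\chi$, and form the pointwise analogue of $P_{\chi_t}(D)-e_{1,1}$ from Definition \ref{def qhi even}. Evaluation at $s$ then reproduces exactly $\Ind^{\varepsilon,r,N}(D)$, and applying the first part with $[f_1]-[f_2]$ replaced by this representative of $[D]$ delivers the second equality. The main obstacle is the bookkeeping: one must verify that the doubly-iterated difference construction is genuinely compatible with pointwise evaluation and with the homotopies used in both pictures, and check that the resulting families really land in $M_{16mn}(\cK^+)$ rather than merely $M_{16mn}(\langle\cK\cup C_0(M)\rangle^+)$. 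For the Dirac step one additionally needs independence (up to the relevant homotopy) of the choice of $\chi_t$, which is standard \cite{QiaoRoe,Yu3}.
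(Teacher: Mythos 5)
Your approach — build the double difference construction pointwise over $t\in[0,\infty)$ and recognize the resulting family in $M_{16mn}(\cK^+)$ simultaneously as the $K_0(\cK_\infty)$-cycle for the localization-algebra pairing and, at $t=s$, as the quantitative representative prior to $\kappa$ — is exactly the "direct comparison" the paper records in one line, so the proposals agree in substance. The only loose phrasing is describing the $K$-homology side as "evaluation at $t=0$": the family need not be a quasi-idempotent at $t=0$, and the $K$-homology pairing is really the $K_0(\cK_\infty)$ class pushed to $K_0(\cK)$ via the isomorphism, which by contractibility of $[0,\infty)$ agrees with $\kappa$-of-evaluation at any $t$ where the pairability bound holds — a point you do address in the following sentences.
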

\begin{proof}
This follows by directly comparing the formula \eqref{eq pairing} for $\langle\cdot,\cdot\rangle_{\varepsilon,r,N,L}$ with the formula for $\langle\cdot,\cdot\rangle_0$ in the localization algebra picture (c.f. \cite[section 3.1]{WXYdecay}).	
\end{proof}

%Heuristically, this pairing can be written
%\begin{align}
%\label{pairing K-homology}
%K_0(C^*_L(M))\times K_0^L(C_0(M))\to K_0(\mathcal{K}_\infty))\to K_0(\mathcal{K})),\nonumber\\
%([T_t],[p]-[q])\mapsto[T_t p]-[T_t q]\mapsto[T_1 p]-[T_1 q].
%\end{align}
%%Indeed, this composition factors through the usual pairing between $K$-homology and topological $K$-theory.
%\begin{proposition}
%\label{prop pairing compatibility}
%\rede{This doesn't make sense. Evaluating at a fixed $t$ doesn't mean propagation $<r$. Might be tidier to just compare the formulae, with the point being that the $K$-homology pairing factors through the evaluation map on $\cK_\infty$.}
%Suppose $rLN^2<C$. Then we have a commutative diagram
%\begin{equation*}
%\begin{tikzcd}
%K_0(C^*_L(M)) \times K_0^L(C_0(M))\arrow[d, "\textnormal{ev}^xr\times \textnormal{id}"]\arrow[r] & K_0(\mathcal{K}_\infty)\arrow[d, "\cong"]\\
%K_0^{\varepsilon,r,N}(C^*(M)) \times K_0^L(C_0(M)) \ar{r} &K(\mathcal{K}),
%\end{tikzcd}
%\end{equation*}
%where the top map is \eqref{pairing K-homology}, and the right-hand isomorphism is induced by evaluation at $1$.
%\end{proposition}
%\begin{proof}
%This follows from the explicit forms of the two pairings. \rede{More details.}
%\end{proof}

%\begin{remark}
%The above pairing applied to the $(\varepsilon,r,N)$-quantitative higher index of $D$ and $[p]-[q]$ returns the relative index of $D\otimes p$ and $D\otimes q$ in the sense of Gromov-Lawson.		
%\end{remark}
\section{Proof of the main theorem}
\label{sec proof general}
As an application of the pairing $\langle\cdot,\cdot\rangle_{\varepsilon,r,N,L}$, we will now prove Theorem \ref{thm main}. This uses the vanishing theorem for the quantitative higher index \cite[Theorem 1.1]{GXY3}, which is a quantitative analogue of Rosenberg's pioneering result that the higher index on the universal cover of a closed manifold is an obstruction to positive scalar curvature \cite{Rosenberg1}. Although \cite[Theorem 1.1]{GXY3} was proved for the \emph{maximal} version of the Roe algebra, it implies the following analogous result in the \emph{reduced} case:
%i.e. in the case of the algebra $C^*(M)$ from Definition \ref{def geometric algebra}:
\begin{theorem}
\label{thm vanishing}
There exists a universal constant $\omega_0$ such that the following holds. Let $M$ be an even-dimensional complete Riemannian spin manifold with Dirac operator $D$ and scalar curvature function $\kappa$. Let $0<\varepsilon<\frac{1}{20}$ and $N\geq 7$. For every $c>0$, if $\kappa\geq c$ uniformly on $M$, then
$$\Ind^{\varepsilon,r,N}(D)=0\in K_0^{\varepsilon,r,N}(C^*(M))$$
for all $r\geq\frac{\omega_0}{\sqrt{c}}$.
\end{theorem}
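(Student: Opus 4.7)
The plan is to deduce the reduced-case statement directly from the maximal-case version in \cite[Theorem 1.1]{GXY3} by functoriality of quantitative $K$-theory under the canonical quotient map. All the analytic work, including the Lichnerowicz estimate and the quantitative propagation bound, is already contained in the maximal result; only a filtration-preserving descent argument is needed.

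First I would recall that the maximal Roe algebra $C^*_{\max}(M)$ is the completion of the algebraic Roe algebra $\mathbb{C}[M]$ with respect to the largest admissible $C^*$-norm, and that there is a canonical surjective $*$-homomorphism $\lambda\colon C^*_{\max}(M)\to C^*(M)$ which restricts to the identity on $\mathbb{C}[M]$. In particular, $\lambda$ maps the filtration subspace $\mathbb{C}[M]_r\subseteq C^*_{\max}(M)$ isomorphically onto $\mathbb{C}[M]_r\subseteq C^*(M)$, so $\lambda$ is filtration-preserving with the same filtration constant $r$. Any such filtration-preserving $*$-homomorphism between geometric $C^*$-algebras automatically sends $(\varepsilon,r,N)$-quasiidempotents to $(\varepsilon,r,N)$-quasiidempotents and respects the equivalence relation, hence induces a homomorphism
\[\lambda_*\colon K_0^{\varepsilon,r,N}(C^*_{\max}(M))\to K_0^{\varepsilon,r,N}(C^*(M))\]
for all admissible parameters.

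Next I would verify the naturality of the quantitative higher index under $\lambda_*$. The element $\chi(D)$ is constructed from the functional calculus $\chi(D)=\frac{1}{2\pi}\int\widehat{\chi}(\xi)e^{i\xi D}\,d\xi$ and the unit-speed propagation of the wave operators forces $\chi(D)$ to lie in (the unitisation of) $\mathbb{C}[M]_{r/5}$; the matrices $P_\chi(D)$ and $e_{1,1}$ involved in Definition \ref{def qhi even} are then built from entries in $\mathbb{C}[M]_r$ and are therefore canonically identified under $\lambda$. Hence $\lambda_*\bigl(\Ind_{\max}^{\varepsilon,r,N}(D)\bigr)=\Ind^{\varepsilon,r,N}(D)$, where $\Ind_{\max}^{\varepsilon,r,N}(D)\in K_0^{\varepsilon,r,N}(C^*_{\max}(M))$ denotes the maximal quantitative higher index defined by exactly the same formula.

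Finally, I would invoke the maximal vanishing theorem \cite[Theorem 1.1]{GXY3}: under the assumption $\kappa\geq c$ uniformly and for $r\geq\omega_0/\sqrt{c}$, one has $\Ind_{\max}^{\varepsilon,r,N}(D)=0$. Applying $\lambda_*$ yields $\Ind^{\varepsilon,r,N}(D)=0$ with the same universal constant $\omega_0$. The only genuinely delicate point is to check that the parameters $(\varepsilon,r,N)$ are preserved exactly (not just up to a controlled perturbation) by $\lambda_*$; this ultimately rests on the fact that $\lambda$ is the identity on $\mathbb{C}[M]$ and does not enlarge propagation or norms, so no parameter slippage is introduced. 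Once that is confirmed, the proof is a one-line functoriality argument.
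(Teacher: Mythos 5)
Your argument is correct and matches the paper's approach exactly: the paper states Theorem \ref{thm vanishing} without proof, noting only that it follows from the maximal-case result of \cite[Theorem 1.1]{GXY3}, and your descent via the canonical filtration-preserving $*$-homomorphism $\lambda\colon C^*_{\max}(M)\to C^*(M)$ is precisely the intended mechanism. Since $\lambda$ is norm non-increasing and restricts to the identity on $\mathbb{C}[M]$, it carries $(\varepsilon,r,N)$-quasiidempotents to $(\varepsilon,r,N)$-quasiidempotents with no parameter loss and fixes $P_\chi(D)$ and $e_{1,1}$ (whose entries lie in $\mathbb{C}[M]$), so $\lambda_*\bigl(\Ind_{\max}^{\varepsilon,r,N}(D)\bigr)=\Ind^{\varepsilon,r,N}(D)$ and the vanishing descends.
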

\vspace{0.1cm}
We can now finish the proof of the main theorem.
\vspace{0.2cm}

\begin{proofof}{Theorem \ref{thm main}}
Suppose that $m$ is even; the odd case can be proved by considering the manifold $M\times\R$ instead. First let us fix some constants. Throughout this proof, we will take $N=7$, $\omega_0$ as in Theorem \ref{thm vanishing}, and $L_{m-1}$ as in Theorem \ref{thm Lipschitz} (see also Remark \ref{rem Lk}).  We will prove the result with
$$k(R,m)\coloneqq C_0(mR^{-1}L_{m-1})^2,$$
where $C_0=2^{150}N^{36}\omega_0^2$. Let $r=\omega_0k(R,m)^{-1/2}$.

Pick a point $x_0\in M$ and $\delta>0$, and let $B_\delta(x_0)$ be the open ball around $x_0$ of radius $\delta$. Let $i\colon B_\delta(x_0)\hookrightarrow M$ be the inclusion and $i_!\colon C_0(B_\delta(x_0))\to C_0(M)$ the extension-by-zero homomorphism. Let $i_{!*}$ and $i_!^*$ be the induced maps on $K$-theory and $K$-homology respectively. Taking $\delta$ sufficiently small, let $\beta_\delta$ denote the Bott element in $K_0(C_0(B_\delta(x_0)))$, and define 
 $$\beta_M=i_{!*}\beta_\delta\in K_0(C_0(M)).$$ 
Let $\cU=\{U_i\}_{i\in\N}$ be a uniformly bounded good cover with Lebesgue number $R$ and $R$-multiplicity $m$. Let $N(\cU)$ denote its nerve complex. By Lemma \ref{lem Lipschitz phi}, the homotopy equivalence $\phi\colon M\to N(\cU)$ in \eqref{eq phi} defined using the partition of unity \eqref{eq varphi} is a proper $mR^{-1}$-Lipschitz map. 
%Letting $\psi\colon C_0(N(\cU))\to C_0(M)$ be the map functorially induced by $\phi$, 
%and let $\psi^*$ be the isomorphism on $K$-theory induced functorially by $\psi$. 
Let $\phi^*$ be the isomorphism on $K$-theory induced functorially by $\phi$. Since $\mathcal{U}$ is a locally compact simplicial complex of dimension $m-1$, Theorem \ref{thm Lipschitz} implies that the class $(\phi^*)^{-1}\beta_M\in K_0(C_0(N(\cU)))$ can be represented by a difference of two elements $p_1,p_2\in P^{L_{m-1}}_\infty(C_0(N(\cU))^+)$, whence $\phi^*p_1$ and $\phi^*p_2$ are elements of $P^{mR^{-1}L_{m-1}}_\infty(C_0(M)^+)$ such that
$$[\phi^*p_1]-[\phi^*p_2]=\beta_M.$$ 

Now suppose $\kappa(x)>k(R,m)$ uniformly on $M$. Let $D$ and $D_{B_\delta(x_0)}$ denote the Dirac operators on $M$ and $B_\delta(x_0)$ respectively. One verifies directly that the quadruple $\big(\varepsilon,r,N,mR^{-1}L_{m-1}\big)$ is pairable for all $\varepsilon$ sufficiently small. Fix such an $\varepsilon$, and let $\chi$ be a normalizing function with $\norm{\chi}_\infty=1$ and $\textnormal{supp}\,\widehat{\chi}\subseteq[-\frac{r}{5},\frac{r}{5}]$. Then $\textnormal{Ind}^{\varepsilon,r,N}(D)=[P_\chi(D)]-[e_{1,1}]$, as in \eqref{eq index}. By Theorem \ref{thm vanishing}, $\textnormal{Ind}^{\varepsilon,r,N}(D)$ vanishes, hence
%vanishes for all $r\geq\frac{\omega_0}{k(R,m)^{1/2}}$, hence
$$\big\langle\Ind^{\varepsilon,r,N}(D),[\phi^*p_1]-[\phi^*p_2]\big\rangle_{\varepsilon,r,N,mR^{-1}L_{m-1}}=0\in K_0(\cK)$$
(see Remark \ref{rem zero}). On the other hand, by Proposition \ref{prop pairing compatibility}, this is equal to
\begin{align*}
\label{eq non-vanishing pairing}
\big\langle[D],\beta_M\big\rangle_0&=\big\langle [D],i_{!*}\beta_{B_\delta}\big\rangle_0\\
&=\big\langle i_!^*[D],\beta_{B_\delta}\big\rangle_0\\
&=\big\langle [D_{B_\delta(x_0)}],\beta_{B_\delta}\big\rangle_0\neq 0,
\end{align*}
where we used that $i_!^*[D]=[D_{B_{\delta}(x_0)}]$ (see for example \cite[Lemma 9.6.9]{WillettYu}). This is
a contradiction.	
\end{proofof}
%\begin{remark}
%The proof of Theorem \ref{thm main} can easily be modified to yield the following more general result: for any $\lambda>0$ and positive integer $m$, there exists a constant $k(\lambda,m)$ such that if $(M,g)$ is a complete Riemannian spin manifold that admits a uniformly bounded good cover with Lebesgue number $\lambda$ and $r$-multiplicity $m$ for \emph{some} $r>0$, then $\inf_{x\in M}\kappa_g(x)\leq k(\lambda,m)$.
%\end{remark}

%\vspace{0.3in}
\bibliographystyle{plain}

\bibliography{../../../../BigBibliography/mybib}

\end{document}